\RequirePackage{ifpdf} 
\ifpdf
\documentclass[11pt,pdftex]{article}
\else
\documentclass[11pt,dvips]{article}
\fi

\ifpdf
\usepackage{lmodern}
\fi
\usepackage[bookmarks, 
colorlinks=false, backref,plainpages=false]{hyperref}
\usepackage[T1]{fontenc}
\usepackage[latin1]{inputenc}
\usepackage[english]{babel}
\usepackage{graphicx}
\usepackage{url}
\usepackage{graphics}
\usepackage{epsfig,color}
\usepackage{amsmath,amssymb}
\usepackage{amsthm}
\setlength{\topmargin}{-.5in} 
\setlength{\textheight}{8.8in}
\setlength{\textwidth}{6.2in}      %
\setlength{\oddsidemargin}{0.2in}  %
\setlength{\evensidemargin}{0.2in} %
\setlength{\evensidemargin}{\oddsidemargin}
\renewcommand{\theequation}{\thesection.\arabic{equation}}

\newtheorem{thm}{Theorem}[section]
\newtheorem{cor}[thm]{Corollary}
\newtheorem{lem}[thm]{Lemma}

\newtheorem{rem}[thm]{Remark}

\begin{document}
\newcommand{\BX}{{\bf X}}
\newcommand{\cv}{{\cal V}}
\newcommand{\cW}{{\cal W}}
\newcommand{\co}{{\cal O}}

\renewcommand{\theequation}{\thesection.\arabic{equation}}
\def\@eqnnum{{\reset@font\rm (\theequation)}}

\def\abstract{
\advance \rightskip by 10mm
\advance \leftskip by 10mm
\vspace{-0.8em}
\noindent
\small{\bf Abstract.}
}
\def\endabstract{\par\normalsize\rm}

\def\Xint#1{\mathchoice
{\XXint\displaystyle\textstyle{#1}}%
{\XXint\textstyle\scriptstyle{#1}}%
{\XXint\scriptstyle\scriptscriptstyle{#1}}%
{\XXint\scriptscriptstyle\scriptscriptstyle{#1}}%
\!\int}
\def\XXint#1#2#3{{\setbox0=\hbox{$#1{#2#3}{\int}$}
\vcenter{\hbox{$#2#3$}}\kern-.5\wd0}}
\def\ddashint{\Xint=}
\def\dashint{\Xint-}

\def\a{\alpha}
\def\b{\beta}
\def\d{\delta}\def\D{\Delta}
\def\e{\epsilon}
\def\g{\gamma}\def\G{\Gamma}
\def\k{\kappa}
\def\lam{\lambda}\def\Lam{\Lambda}
\renewcommand\o{\omega}\renewcommand\O{\Omega}
\def\s{\sigma}\def\S{\Sigma}
\renewcommand\t{\theta}\def\vt{\vartheta}
\newcommand{\vphi}{\varphi}
\def\z{\zeta}

\newcommand{\tsigma}{\tilde{\s}}
\newcommand{\tbsigma}{\tilde{\bsigma}}
\def\te{\tilde{\e}}
\def\tu{\tilde{u}}

\newcommand{\bchi}{\mbox{\boldmath$\chi$}}
\newcommand{\bdelta}{\mbox{\boldmath$\delta$}}
\newcommand{\bepsilon}{\mbox{\boldmath$\epsilon$}}
\newcommand{\bfeta}{\mbox{\boldmath$\eta$}}
\newcommand{\bgamma}{\mbox{\boldmath$\gamma$}}
\newcommand{\bomega}{\mbox{\boldmath$\omega$}}
\newcommand{\bvphi}{\mbox{\boldmath$\varphi$}}
\newcommand{\bphi}{\mbox{\boldmath$\phi$}}
\newcommand{\bPhi}{\mbox{\boldmath$\Phi$}}
\newcommand{\bpsi}{\mbox{\boldmath$\psi$}}
\newcommand{\bPsi}{\mbox{\boldmath$\Psi$}}
\newcommand{\bsigma}{\mbox{\boldmath$\sigma$}}
\newcommand{\btau}{\mbox{\boldmath$\tau$}}
\newcommand{\bxi}{\mbox{\boldmath$\xi$}}
\newcommand{\brho}{\mbox{\boldmath$\rho$}}
\newcommand{\bbeta}{\mbox{\boldmath$\beta$}}
\newcommand{\bzeta}{\mbox{\boldmath$\zeta$}}

\def\bk{\boldsymbol{\kappa}}
\def\bmu{\boldsymbol\mu}
\def\bxi{\boldsymbol{\xi}}
\def\bz{\boldsymbol{\zeta}}

\def\ba{{\bf a}}
\def\bb{{\bf b}}
\def\bc{{\bf c}}
\def\be{{\bf e}}
\def\bff{{\bf f}}
\def\bg{{\bf g}}
\def\bn{{\bf n}}
\def\bp{{\bf p}}
\def\bq{{\bf q}}
\def\bs{{\bf s}}
\def\bt{{\bf t}}
\def\bu{{\bf u}}
\def\bv{{\bf v}}
\def\bw{{\bf w}}
\def\bx{{\bf x}}
\def\by{{\bf y}}
\def\bzz{{\bf z}}

\def\bD{{\bf D}}
\def\bE{{\bf E}}
\def\bF{{\bf F}}
\def\bH{{\bf H}}
\def\bJ{{\bf J}}
\def\bV{{\bf V}}
\def\bU{{\bf U}}
\def\bW{{\bf W}}
\def\bX{{\bf X}}
\def\bY{{\bf Y}}

\def\cA{{\cal A}}
\def\cC{{\cal C}}
\def\cD{{\cal D}}
\def\cE{{\cal E}}
\def\cF{{\cal F}}
\def\cG{{\cal G}}
\def\cI{{\cal I}}
\def\cJ{{\cal J}}
\def\cK{{\cal K}}
\def\cL{{\cal L}}
\def\cO{{\cal O}}
\def\cP{{\cal P}}
\def\cQ{{\cal Q}}
\def\cR{{\cal R}}
\def\cS{{\cal \Sigma}}
\def\cT{{\cal T}}
\def\cU{{\cal U}}
\def\cV{{\cal V}}

\def\scT{{_\cT}}
\def\sD{{_D}}
\def\sE{{_E}}
\def\sF{{_F}}
\def\sFz{{_{F_z}}}
\def\sK{{_K}}
\def\sI{{_I}}
\def\sb{{_b}}
\def\sN{{_N}}

\def\curl{{{\bf curl} \ }}
\def\rot{{\mbox{rot}\ }}
\def\BPI{{\bf \Pi}}

\def\cth{\cT_h}
\def\ctH{\cT_H}

\def\tJ{\tilde{\J}}

\def\hK{\widehat{K}}
\def\hx{\widehat{x}}
\def\hy{\widehat{y}}
\def\bhv{\widehat{\bv}}

\def\l{\ell}
\def\bl{\boldsymbol{\ell}}
\def\col{\colon}
\def\f12{\frac12}
\def\dfrac{\displaystyle\frac}
\def\dint{\displaystyle\int}
\def\nab{\nabla}
\def\p{\partial}
\def\sm{\setminus}
\def\dsum{\displaystyle\sum}
\newcommand{\pp}[2]{\frac{\partial {#1}}{\partial {#2}}}
\def\bzero{{\bf 0}}

\def\divv{\nab\cdot}
\def\divx{\nab_x\cdot}
\def\divtx{\nab_{t,x}\cdot}
\def\nabx{\nab_x}

\newcommand{\curlt}{{\nabla \times}}
\newcommand{\gperp}{\nabla^{\perp}}
\newcommand{\gradt}{\nabla\cdot}

\def\forallqq{\quad\forall\,}
\def\aph{A^{1/2}}
\def\amh{A^{-1/2}}

\def\osc{{\rm osc \, }}

\def\Im{{\rm Im}}
\newcommand{\tr}{{\rm tr}}
\def\divvr{{\rm div}}
\def\curllr{{\rm curl}}
\def\curll{{\rm curl}}
\def\curl{{\bf curl}}
\newcommand{\bgrad}{{\bf grad}}
\newcommand\diam{\mathrm{diam\,}}
\renewcommand\Im{\mathrm{Im\,}}
\def\Span{\mbox{Span}}
\def\supp{\mbox{supp\,}}
\newcommand{\trace}{{\rm trace}}

\newcommand{\tri}{|\!|\!|}
\newcommand{\ljump}{\lbrack\!\lbrack}
\newcommand{\rjump}{\rbrack\!\rbrack}
\newcommand{\bdm}{\begin{displaymath}}
\newcommand{\edm}{\end{displaymath}}
\newcommand{\beq}{\begin{equation}}
\newcommand{\eeq}{\end{equation}}
\newcommand{\beqa}{\begin{eqnarray}}
\newcommand{\eeqa}{\end{eqnarray}}
\newcommand{\beqas}{\begin{eqnarray*}}
\newcommand{\eeqas}{\end{eqnarray*}}
\newcommand{\ul}{\underline}
\newcommand{\wh}{\widehat}
\newcommand{\la}{\langle}
\newcommand{\ra}{\rangle}

\newcommand{\Lt}{L^2(\Omega)}
\newcommand{\Lts}{L^2(\Omega)^2}
\newcommand{\Ltc}{L^2(\Omega)^3}
\newcommand{\Ho}{H^1(\Omega)}
\newcommand{\Hoh}{H^1(\wh{\Omega})}
\newcommand{\Hoi}{H^1(\Omega_i)}
\newcommand{\Hos}{H^1(\Omega)^2}
\newcommand{\Hoc}{H^1(\Omega)^3}
\newcommand{\Hoch}{H^1(\wh{\Omega})^3}
\newcommand{\Hoci}{H^1(\Omega_i)^3}
\newcommand{\Hoz}{H^1_0(\Omega)}
\newcommand{\Ht}{H^2(\Omega)}
\newcommand{\Hti}{H^2(\Omega_i)}
\newcommand{\Hts}{H^2(\Omega)^2}
\newcommand{\Htc}{H^2(\Omega)^3}
\newcommand{\Htz}{H^0(\Omega)}
\newcommand{\Hh}{H^{1/2}(\Gamma)}
\newcommand{\Hhi}{H^{1/2}(\Gamma_i)}
\newcommand{\Hmh}{H^{-1/2}(\Gamma)}
\newcommand{\Hdiv}{H(\divvr;\,\Omega)}
\newcommand{\Hdivh}{H(\divv;\,\wh \Omega)}
\newcommand{\hcurl}{H(\curl\,A;\,\Omega)}
\newcommand{\Hcurl}{H(\curll\,A;\,\Omega)}
\newcommand{\Hcrl}{H(\curll\,;\,\Omega)}
\newcommand{\hcrl}{H(\curl\,;\,\Omega)}
\newcommand{\Hcrlh}{H(\curll\,;\,\wh\Omega)}
\newcommand{\hcrlh}{H(\curl\,;\,\wh\Omega)}
\newcommand{\Wdiv}{\BW_0(\mbox{\divv}\,;\,\Omega)}
\newcommand{\Wcurl}{\BW_0(\mbox{\curl}\,A;\,\Omega)}
\newcommand{\WcrossV}{\BW \times V}

\def\grad{\nabla}
\def\calS{{\cal S}}
\def\cH{{\cal H}}
\def\ba{{\mathbf{a}}}
\def\cN{{\cal N}}  

\def\bE{{\bf E}}
\def\bS{{\bf S}}
\def\br{{\bf r}}
\def\bW{{\bf W}}
\def\bLambda{{\bf \Lambda}}

\def\zT{{z_{_{\cT}}}}
\def\vT{{v_{_{\cT}}}}
\def\uT{{u_{_{\cT}}}}

\newcommand{\dd}{\underline{{\mathbf d}}}
\newcommand{\C}{\rm I\kern-.5emC}
\newcommand{\R}{\rm I\kern-.19emR}
\newcommand{\W}{{\mathbf W}}
\def\3bar{{|\hspace{-.02in}|\hspace{-.02in}|}}
\newcommand{\A}{{\mathcal A}}

\newcommand{\aA}{{ \a_{\sF,_A}}}

\newcommand{\aH}{{ \a_{\sF,_H}}}

\newcommand{\lJump}{[\![}
\newcommand{\rJump}{]\!]}
\newcommand{\jump}[1]{[\![ #1]\!]}

\newcommand{\red}[1]{{\color{red} {#1} }}

\def\Xint#1{\mathchoice
{\XXint\displaystyle\textstyle{#1}}%
{\XXint\textstyle\scriptstyle{#1}}%
{\XXint\scriptstyle\scriptscriptstyle{#1}}%
{\XXint\scriptscriptstyle\scriptscriptstyle{#1}}%
\!\int}
\def\XXint#1#2#3{{\setbox0=\hbox{$#1{#2#3}{\int}$}
\vcenter{\hbox{$#2#3$}}\kern-.5\wd0}}
\def\ddashint{\Xint=}
\def\dashint{\Xint-}

\title {Finite Element Methods for Interface Problems: \\
Robust and Local Optimal A Priori Error Estimates}
\author{Zhiqiang Cai\thanks{
Department of Mathematics, Purdue University, 150 N. University
Street, West Lafayette, IN 47907-2067, caiz@purdue.edu.
This work was supported in part by the National Science Foundation
under grants DMS-1217081 and DMS-1522707.}
\and Shun Zhang\thanks{Department of Mathematics,
City University of Hong Kong, Hong Kong SAR, China, shun.zhang@cityu.edu.hk.
This work was supported  in part by Research Grants Council of the Hong Kong SAR, China under the GRF Grant Project No. 11303914, CityU 9042090.}}
 \date{\today}
 \maketitle

\begin{abstract}
For elliptic interface problems in two- and three-dimension, 
this paper establishes a priori error estimates for 
the Crouzeix-Raviart nonconforming, the Raviart-Thomas mixed, and the discontinuous Galerkin
finite element approximations. These estimates are robust with respect to the 
diffusion coefficient and optimal with respect to local regularity of the solution. Moreover,
we obtain these estimates with no assumption on the distribution of the diffusion coefficient.
\end{abstract}

\section{Introduction}\label{intro}
\setcounter{equation}{0}

As a prototype of problems with interface singularities, this paper studies {\it a priori}
error estimates of various finite element methods for the following
interface problem (i.e., the diffusion problem with discontinuous coefficients):
 \begin{equation}\label{scalar}
 -\nabla\cdot \,(\a(x)\nabla\, u) = f
 \quad \mbox{in} \,\,\Omega
 \end{equation}
with homogeneous Dirichlet boundary conditions (for simplicity)
 \beq\label{bc1}
 u = 0 \quad \mbox{on } \p \O,
 \eeq
where  
$\Omega$ is a bounded polygonal domain in $\R^d$ with $d=2$ or $3$;
$f \in L^{2}(\O)$ is a given function;
and diffusion coefficient $\a(x)$ is positive 
and piecewise constant with possible
large jumps across subdomain boundaries (interfaces):
\[
 \a(x)=\a_i > 0\quad\mbox{in }\,\O_i
 \quad\mbox{for }\, i=1,\,...,\,n.
\]
Here, $\{\Omega_i\}_{i=1}^n$ is a partition of the domain $\O$ with
$\O_i$ being an open polygonal domain. The variational formulation for the interface problem in (\ref{scalar}) and
(\ref{bc1}) is to find $u\in H_{0}^1(\O)$ such that
 \beq\label{galerkin}
(\a \nabla u,\,\nabla v) = (f,v)\quad\forall\,\,v\in H_{0}^1(\O).
\eeq
It is well known that the solution $u$ of problem (\ref{galerkin}) belongs to $H^{1+s}(\O)$
with possibly very small $s> 0$.

Let $\cT=\{K\}$ be a regular triangulation of the domain
$\Omega$ (see, e.g., \cite{Cia:78, BrSc:08}). 
Denote by $h_K$ the diameter of the element $K$. Assume that
interfaces $\{\p\O_i\cap\p\O_j\,:\, i,j=1,\,...,\,n\}$ do not cut
through any element $K\in\cT$. For any element $K\in\cT$, 
denote by $P_k(K)$ the space of polynomials on $K$ with total degree less than or equal to $k$. Denote 
the continuous finite element space on the triangulation $\cT$ by
\[
 V_k^c= \{ v \in H^1_0(\O)\, :\,  v|_K \in P_k(K) \;\forall\, K\in\cT\}.
\]
Then the conforming finite element method is to 
find $u^c_k \in V^c_k$ such that
\beq \label{problem_c}
  (\a \nabla u^c_k,\, \nabla v) = (f,v)  \qquad \,\forall\, v\in
  V_k^c.
\eeq
The following a priori error estimate was established in \cite{BeVe:00}:
\beq \label{c:apriori}
 \|\a^{1/2}\grad(u-u^c_k)\|_{0,\O} 
 = \inf_{v \in V^c_k} \|\a^{1/2}\grad (u-v)\|_{0,\O}
 \leq C\, \left(\sum_{K\in \cT}h^{2s}\a_K\|\grad u\|^2_{s,K}\right)^{1/2}.
\eeq
Here and thereafter, 
we use C with or without subscripts to denote a generic positive constant that is
independent of the mesh parameter and the jump of $\a(x)$
but that may depend on the domain $\O$.
The estimate in (\ref{c:apriori})
is robust with respect to $\a$, but 
not optimal with respect to the local regularity since $s$ is a global exponent. 
This kind of a priori error estimate is not satisfactory. For example, for the well-known Kellogg's example of the
interface problem in \cite{Kel:74, CaZh:08a}, 
the solution of the underlying problem has low regularity on elements along the physical interfaces,
but is very smooth on elements away from the physical interfaces. 

By Sobolev's embedding theorem (see, e.g., \cite{Gri:85}), $H^{1+s}(\O)$, with 
$s>0$ for the two-dimension and $s>1/2 $ for the three-dimension, is embedded 
in $C^0(\O)$ and, hence, the nodal interpolation of the solution $u$
is well-defined. In \cite{DuSc:80}, it is proved that if $v \in H^{1+s}(K)$ with $s>0$ in the two-dimension,
then for $0<t\leq s$, the following estimate holds for the linear nodal interpolation $I_K$:
$$
\| v- I_K v\|_{0,K} \leq C h^{1+t} |\grad v|_{t,K}.
$$
With the same technique, we can also prove the result for $s>1/2 $ in the three-dimension.
This implies the following a priori error estimate that is not only 
robust with respect to the jump of $\a$ but also
with respect to the local regularity (see Section 3.3 of \cite{Pet:01} in the two-dimension).

\begin{cor} \label{optimal2d}
Let $u\in H^{1+s}(\O)$ with $s>0$ be the solution of problem {\em (\ref{c:apriori})}. 
Assume that $s >1/2$ for $d=3$ and that 
the restriction of $u$ on element $K$ belongs to $H ^{1+s_K}(K)$ for all $K\in\cT$. Then 
 \beq\label{apriori}
 \|\a^{1/2}\nabla (u - u^c_k)\|_{0,\O} 
  \leq   C \,\left(\sum_{K\in\cT} h_K^{2\min\{k,s_K\}}\a_K|\nabla u|^2_{s_K,K} 
  \right)^{1/2}.
 \eeq
 \end{cor}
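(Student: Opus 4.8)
\medskip
\noindent\emph{Proof idea.}
The plan is to start from the best-approximation identity already recorded in (\ref{c:apriori}): because $u^c_k$ is the orthogonal projection of $u$ onto $V^c_k$ in the inner product $(\a\nabla\cdot,\nabla\cdot)$,
\[
\|\a^{1/2}\nabla(u-u^c_k)\|_{0,\O}
=\inf_{v\in V^c_k}\|\a^{1/2}\nabla(u-v)\|_{0,\O}
\le\|\a^{1/2}\nabla(u-I_\cT u)\|_{0,\O},
\]
where $I_\cT$ denotes the degree-$k$ Lagrange (nodal) interpolation operator on $\cT$, assembled from the local operators $I_K$. Thus the whole proof reduces to bounding the interpolation error by the right-hand side of (\ref{apriori}). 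First I would check that $I_\cT u$ is an admissible competitor, i.e.\ $I_\cT u\in V^c_k$: by the Sobolev embedding $H^{1+s}(\O)\hookrightarrow C^0(\bar\O)$ --- valid precisely because $s>0$ when $d=2$ and $s>1/2$ when $d=3$, the two standing hypotheses --- the nodal values of $u$ are well defined, the element-wise interpolants match across interelement faces so $I_\cT u\in H^1(\O)$, the boundary condition $u=0$ on $\p\O$ gives $I_\cT u\in H^1_0(\O)$, and $(I_\cT u)|_K\in P_k(K)$ by construction.

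Since $\a$ is constant on each $K$, the energy norm decouples element by element,
\[
\|\a^{1/2}\nabla(u-I_\cT u)\|_{0,\O}^2=\sum_{K\in\cT}\a_K\,\|\nabla(u-I_Ku)\|_{0,K}^2,
\]
and on each $K$ I would invoke the $H^1$-seminorm counterpart of the Du--Scott estimate quoted above: for $u|_K\in H^{1+s_K}(K)$ (with $s_K>0$ when $d=2$ and $s_K>1/2$ when $d=3$, both automatic here since $s_K\ge s$),
\[
\|\nabla(u-I_Ku)\|_{0,K}\le C\,h_K^{\min\{k,s_K\}}\,|\nabla u|_{s_K,K}.
\]
This is established exactly as in \cite{DuSc:80}: boundedness of $I_K$ from $H^{1+s_K}(K)$ into $P_k(K)$ (Sobolev embedding again), invariance of $I_K$ on $P_k(K)$, the fractional-order Bramble--Hilbert (Dupont--Scott) lemma on the reference element, and affine scaling; the exponent is capped at $k$ because only polynomials up to degree $k$ are reproduced, while the regularity available is $H^{1+s_K}(K)$. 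Substituting and summing over $K\in\cT$ gives
\[
\|\a^{1/2}\nabla(u-u^c_k)\|_{0,\O}^2\le C\sum_{K\in\cT}\a_K\,h_K^{2\min\{k,s_K\}}\,|\nabla u|_{s_K,K}^2,
\]
and (\ref{apriori}) follows on taking square roots.

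The only genuinely technical ingredient is the local $H^1$ interpolation estimate at low regularity, i.e.\ pushing the Bramble--Hilbert argument down to $H^{1+s_K}(K)$ with $s_K$ possibly very small in two dimensions, or barely above $1/2$ in three dimensions; this is exactly the point at which the embedding into $C^0(\bar\O)$ --- hence the restriction $s>1/2$ for $d=3$ --- is needed to make the nodal interpolant well defined, and everything else is bookkeeping. It is worth emphasizing why the two advertised features come essentially for free: the identity (\ref{c:apriori}) already carries the weight $\a$, and since $I_\cT$ is built purely locally, element $K$ contributes only $\a_K$ times its own interpolation error --- no constant depending on the geometric layout of the subdomains or on the size of the jumps of $\a$ ever enters --- while the exponent $\min\{k,s_K\}$ reflects the true local Sobolev regularity of $u$ on $K$ rather than the global $s$.
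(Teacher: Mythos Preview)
Your proposal is correct and follows essentially the same route the paper sketches in the paragraph preceding the corollary: the paper does not give a formal proof but simply points to the Dupont--Scott nodal interpolation estimate, the Sobolev embedding that makes the nodal interpolant well defined (hence the restriction $s>1/2$ in three dimensions), and C\'{e}a's lemma in the form of the first equality in (\ref{c:apriori}). Your write-up supplies exactly these ingredients, correctly noting that it is the $H^1$-seminorm version of the local interpolation bound that is actually needed and that the argument $s_K\ge s$ is automatic from the global regularity assumption.
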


\begin{rem}
In the case that $s\in (0,\,1/2]$ in the three-dimension, 
under the quasi-monotonicity assumption {\em (QMA)} on the distribution of the coefficient $\a(x)$
{\em (}see section~{\em 1.1)}, estimate {\em (\ref{apriori}) } 
may be obtained through comparison results with discontinuous Galerkin method or Cl\'{e}ment-type of 
interpolations  {\em(}see {\em \cite{CaZh:comp})}. 
\end{rem}

The a priori error estimate using local regularity in (\ref{apriori}) is 
the base for adaptive finite element 
methods to achieve equal discretization error distribution (see \cite{NoVe:12} for
examples in both the one- and two-dimensions) and, hence, is important. 
Moreover, the QMA is already restrictive enough
in the two-dimension and is much worse in the three-dimension. 
The purpose of this paper is to derive estimates of this type 
for the Crouzeix-Raviart nonconforming, 
the Raviart-Thomas mixed, and the discontinuous Galerkin finite element approximations.
These estimates hold 
when the solution of (\ref{galerkin}) has low global regularity, i.e., $s\in (0,\,1/2]$ in the
three-dimension, and, in particular, the distribution of the coefficient does not satisfy the QMA.
In this sense, these estimates are better than that of the conforming finite element methods.
Analysis for the mixed elements is rather straightforward. However, derivation of 
such estimates for the nonconforming and the discontinuous elements is non-trivial. 
In order to achieve them, we prove the robust C\'{e}a's Lemma type of results for the Crouzeix-Raviart nonconforming and
the discontinuous Galerkin finite element approximations for the first time.
Besides making use of both analytical approaches developed recently in the respective
\cite{CaYeZh:09} and \cite{Gud:10a}, we also need to establish 
new trace inequalities (see Lemmas~2.3 and 2.4). These trace inequalities
also play an important role in the a posteriori error estimates (see \cite{CaHeZh:15}).

Standard a priori error estimate for the discontinuous elements 
(see, e.g., \cite{ABCM:02, Ri:08}) requires the underlying problem being sufficiently smooth,
i.e, at least piecewise $H^{3/2+\epsilon}$, so that there is an error equation. 
For problems with low regularity, by carefully defining duality pairs on element interfaces,
in \cite{CaYeZh:09} we developed a non-standard variational formulation 
that, in term, leads to an error equation and then an a priori error estimate. 
The estimate in \cite{CaYeZh:09} is robust with respect to $\a$ without the QMA,
but not local optimal due to the use of a continuous approximation in our analysis.
An alternative approach was developed by Gudi \cite{Gud:10a} for the Poisson equation.
His approach compares the discontinuous solution with the continuous solution, 
and makes use of the efficiency bound of the a posteriori error estimation. Moreover,
it is applicable to problems with low regularity. Its application to interface 
problems with the Oswald analyzed introduced in \cite{CaYeZh:09} would yield
an a priori error estimate that is robust under the QMA. 

The paper is organized as follows. Section~2 introduces Sobolev spaces of fractional order
and establishes some new trace inequalities that play an important role in both the a priori
and a posteriori error estimates. Various finite element approximations are described 
in section~3. Robust and local optimal a priori error estimates without QMA are derived 
in section~4.

\subsection{Quasi-Monotonicity  Assumption}

To establish the a priori and, in particular, the a posteriori error estimates to be robust
with respect to the diffusion coefficient $\a(x)$, one often requires its distribution 
satisfying certain conditions. Hypothesis 2.7 in \cite{BeVe:00} is a monotonic condition
that is weaken to Quasi-Monotonicity Assumption (QMA) in \cite{Pet:02}. Such a condition
also appeared in the convergence analysis of the domain decomposition method 
in \cite{DrSaWi:96}.

\noindent{\bf Quasi-Monotonicity  Assumption.}
{\em Assume that any two different
subdomains $\bar{\O}_i$ and $\bar{\O}_j$, which share at least one
point, have a connected path passing from $\bar{\O}_i$ to
$\bar{\O}_j$ through adjacent subdomains such that the diffusion
coefficient $\a(x)$ is monotone along this path.}  

This assumption is needed in all previous papers on the robustness of the interface problem, e.g., \cite{BeVe:00, CaYeZh:09, CaZh:08a, CaZh:10a, CaZh:12a, Pet:02}. 
Robust estimates established in this paper do not require the QMA.

\section{Sobolev Space and Preliminaries}
\setcounter{equation}{0}

\subsection{Sobolev space of fractional order}

Let $\O$ be a non-empty open set in $\R^d$. 
We use the standard notation and definitions for the Sobolev
spaces $H^m(\O)^d$ and $H^m(\partial\O)^d$ with integer $m\ge 0$; the
standard associated inner products are denoted by $(\cdot , \,
\cdot)_{m,\O}$ and $(\cdot , \, \cdot)_{m,\partial\O}$, and
their respective norms (semi-norms) are denoted by $\|\cdot \|_{m,\O}$ and
$\|\cdot\|_{m,\partial\O}$
($|\cdot |_{m,\O}$ and
$|\cdot|_{m,\partial\O}$). We suppress the superscript $d$
because their dependence on dimension will be clear by context. We
also omit the subscript $\O$ from the inner product and norm
designation when there is no risk of confusion. For $m=0$,
$H^m(\O)^d$ coincides with $L^2(\O)^d$. In this case, the inner
product and norm will be denoted by $\|\cdot\|_0$ and
$(\cdot,\,\cdot)$, respectively. 

For $t\in (0,\,1)$, the following semi-norm 
\[
|v|_{t, \O} = \left( \int_{\O}\int_{\O} \dfrac{|v(x)-v(y)|^2}{|x-y|^{d+2t}} dxdy
\right)^{1/2}\quad 0<t<1
\]
is used to define Sobolev spaces of fractional order. For integer $m\geq 0$,
Sobolev space $H^s(\O)$ with $s= m+t$ is equipped with the norm
 \beq\label{sobolev}
 \|v\|_{s,\O} 
 =\left( \sum_{|\a| \leq m} \int_{\O} |\p^{\a}v|^2dx +|v|_{s,\O}^2\right)^{1/2},
\eeq
where $|v|_{s,\O}$ is a semi-norm defined by
\beq\label{sobolevsemi}
|v|_{s,\O} =\left( \sum_{|\a| = m}   |\p^{\a}v|_{t, \O}^2
\right)^{1/2}.
\eeq
Sobolev spaces with negative indecies are defined through duality.

Another way to define Sobolev spaces of fractional order is by the method of interpolation.
To this end, let $B_1 \subset B_0$ be Banach spaces. For $t>0$ and $u\in B_0$, 
define the $K$-functional by
$$
K(t,u) = \inf_{v\in B_1}(\|u-v\|^2_{B_0} + t^2 \|v\|_{B_1}^2 )^{1/2}.
$$
For $0<\t<1$, the interpolation space $B_\t = [B_0, B_1]_{\t}$ is a Banach space equipped with  the norm
\beq\label{internorm}
\|u\|_{[B_0,B_1]_{\t}} = N_\t \left(\int_0^{\infty} | t^{-\t} K(t,u)| ^2 \dfrac{dt}{t}\right)^{1/2},
\eeq
where $N_\t >0$ is a normalization factor.

For any real numbers $s_0 \leq s_1$, let $s=m+t=(1-\theta)s_0 + \theta s_1$
with $\t\in (0,\,1)$. It was shown (see Theorem B.8 in \cite{McL:00}) that 
 \[
 [H^{s_0}(\O), H^{s_1}(\O)]_\t = H^s(\O) 
 \]
and that the norms defined in (\ref{sobolev}) and (\ref{internorm}) 
are identical if the normalization factor is chosen to be
$N_\t = \sqrt{\dfrac{2 \sin{(\pi \t)}}{\pi}}$.
Moreover, for $v \in H^{s_1}(\O)$, it was shown (see Theorem B.1 in \cite{McL:00}) that 
\beq \label{normineq}
\|v\|_{s,\O} \leq \sqrt{\dfrac{\sin(\pi \theta)}
 { \pi\t (1-\t) }} \,\|v\|^{1-\theta}_{s_0,\O}\, \|v\|^{\theta}_{s_1,\O}.
\eeq

\begin{lem} \label{epsilon}
Let $s>0$, $t\in [0,\,s)$, and $K\in\cT$. Assume that $v$ is a given function in $H^{s}(K)$.
For any given $\epsilon >0$,
there exists a small $\delta\in (0,\,s-t)$, depending on $v$, such that
\beq \label{eps-bd}
\|v\|_{t+\delta,K} \leq (1+\epsilon) \,
 \|v\|_{t,K}. 
\eeq
\end{lem}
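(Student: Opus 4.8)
The plan is to read off (\ref{eps-bd}) directly from the multiplicative interpolation inequality (\ref{normineq}), treating $\|v\|_{t+\delta,K}$ as the norm of an interpolation space lying between $H^t(K)$ and $H^s(K)$. First I would dispose of the trivial case: if $v=0$ in $H^s(K)$, then $\|v\|_{r,K}=0$ for every $r\in[0,s]$ and (\ref{eps-bd}) holds for any admissible $\delta$. So assume $v\neq 0$. Since $t<s$, the norm $\|v\|_{t,K}$ is then strictly positive — for $t>0$ it dominates $\|v\|_{0,K}$ by (\ref{sobolev}), and for $t=0$ it equals $\|v\|_{0,K}$ — and likewise $\|v\|_{s,K}>0$; this makes the ratio of these norms, which will appear below, meaningful.

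Next, for $\delta\in(0,\,s-t)$ set $\theta=\theta(\delta)=\delta/(s-t)\in(0,1)$, so that $t+\delta=(1-\theta)\,t+\theta\, s$. Applying (\ref{normineq}) on $K$ with $s_0=t$ and $s_1=s$ gives
\[
\|v\|_{t+\delta,K}\;\le\;\sqrt{\frac{\sin(\pi\theta)}{\pi\theta(1-\theta)}}\;\|v\|_{t,K}^{\,1-\theta}\,\|v\|_{s,K}^{\,\theta}
\;=\;\left[\sqrt{\frac{\sin(\pi\theta)}{\pi\theta(1-\theta)}}\left(\frac{\|v\|_{s,K}}{\|v\|_{t,K}}\right)^{\!\theta}\right]\|v\|_{t,K}.
\]
It then remains to verify that the bracketed factor, call it $C(\delta)$, satisfies $C(\delta)\to 1$ as $\delta\to 0^+$. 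Indeed $\theta(\delta)\to 0$, so $\sin(\pi\theta)/(\pi\theta)\to 1$ and $1-\theta\to 1$, whence the square root tends to $1$; and since $\|v\|_{s,K}/\|v\|_{t,K}$ is a fixed positive number, $(\|v\|_{s,K}/\|v\|_{t,K})^{\theta}\to 1$ as $\theta\to 0$. Hence $C(\delta)\to 1$, and for the prescribed $\epsilon>0$ we may choose $\delta\in(0,\,s-t)$ small enough that $C(\delta)\le 1+\epsilon$, which is precisely (\ref{eps-bd}).

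I do not expect a genuine obstacle here. The only points needing a little care are the reduction to $v\neq 0$ (so that the ratio of norms is well defined) and the observation that the rate at which $C(\delta)\to 1$ is controlled by the size of $\|v\|_{s,K}/\|v\|_{t,K}$, which depends on $v$ — this is exactly why the admissible $\delta$ must be allowed to depend on $v$, as asserted in the statement. One may also note that (\ref{normineq}) applies across integer values of the smoothness index, since the identification $[H^{s_0}(K),H^{s_1}(K)]_\theta=H^{t+\delta}(K)$ with matching norms (Theorem B.8 in \cite{McL:00}) holds whether or not $t+\delta$ is an integer, so no separate bookkeeping is needed when $t$ or $t+\delta$ happens to be an integer.
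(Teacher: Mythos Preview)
Your proof is correct and follows essentially the same route as the paper's own argument: both use the interpolation identification $H^{t+\delta}(K)=[H^t(K),H^s(K)]_\theta$ with $\theta=\delta/(s-t)$, apply (\ref{normineq}), and observe that the resulting prefactor tends to $1$ as $\theta\to 0$. Your write-up is somewhat more detailed in justifying $\|v\|_{t,K}>0$ and explaining why $\delta$ must depend on $v$, but the substance is identical.
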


\begin{proof}
Obviously, (\ref{eps-bd}) holds for $v=0$. Assume that $v\not=0$.
For any $\delta\in (0,\,s-t)$, we have
$$
H^{t+\delta}(K) = [H^t(K), H^{s}(K)]_{\t} \quad \mbox{with  } \t = \dfrac{\delta}{s-t},
$$
which, together with (\ref{normineq}), implies
$$
\|v\|_{t+\delta,K}
 \leq \sqrt{\dfrac{\sin(\pi \theta)}{ \pi\t (1-\t) }} \,\|v\|_{t,K}^{1-\t}\, \|v\|_{s,K}^{\t} 
 = \sqrt{\dfrac{\sin(\pi \theta)}{ \pi\t (1-\t) }} \,
 \left(\dfrac{\|v\|_{s,K}}{\|v\|_{t,K}} \right)^{\t}\, \|v\|_{t,K}.
$$
Now, (\ref{eps-bd}) is a consequence of the fact that
 \[
 \lim_{\t\to 0} \,
 \sqrt{\dfrac{\sin(\pi \theta)}{ \pi\t (1-\t) }} \,
 \left(\dfrac{\|v\|_{s,K}}{\|v\|_{t,K}} \right)^{\t}
  =1.
\]
This completes the proof of the lemma.
\end{proof}

\begin{rem}
Since $\|v\|_{t,K}\leq \|v\|_{t+\delta,K}$, {\rm Lemma~\ref{epsilon}} implies that
 \[ 
 \lim\limits_{\delta \rightarrow 0^+}\|v\|_{t+\delta} = \|v\|_t. 
\]
Note that this continuity is not uniform with respect to $v$.
\end{rem}

\subsection{trace inequalities}

For any $K\in\cT$ and some $\alpha>0$, let 
 \[
 V^{1+\alpha}(K)=\{v\in H^{1+\a}(K)\,:\, \Delta\, v \in L^2(K)\}.
 \]

\begin{lem} 
Let $F$ be a face of $K\in\cT$ and let $s>0$.
Assume that $v$ is a given function in $V^{1+s}(K)$.
Then there exists a small $0<\delta <\min\{s,1/2\}$, depending on $v$, 
and a positive constant $C$ independent of $\delta$ such that
\beq \label{traceepsilon}
\|\nabla v\cdot \bn\|_{\delta-1/2,F}\leq
C\,\left(\|\nabla v\|_{0,K} + h_K\|\Delta v\|_{0,K}\right).
\eeq
\end{lem}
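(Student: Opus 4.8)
The plan is to combine a standard trace inequality on the reference element for functions in $V^{1+\delta}(K)$ with the $\epsilon$-regularity result of Lemma~\ref{epsilon} applied to the vector field $\nabla v$, so that the extra fractional regularity $\delta$ needed to give meaning to the normal trace $\nabla v\cdot\bn$ in $H^{\delta-1/2}(F)$ costs us only a harmless factor $(1+\epsilon)$ in the $H^1(K)$-seminorm. First I would recall the classical trace result: for any fixed $\delta\in(0,1/2)$ there is a constant $C_\delta$ with
\[
\|w\cdot\bn\|_{\delta-1/2,\partial K}\le C_\delta\left(\|w\|_{\delta,K}+\|\nabla\cdot w\|_{0,K}\right)
\]
for $w\in H^\delta(K)^d$ with $\nabla\cdot w\in L^2(K)$, on a reference element $\hat K$, and then scale to $K$ of diameter $h_K$; with $w=\nabla v$ this gives $\nabla\cdot w=\Delta v$ and produces the shape of the right-hand side in (\ref{traceepsilon}), with $h_K$ weighting the $\|\Delta v\|_{0,K}$ term after scaling. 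A face $F$ is a subset of $\partial K$, so $\|\nabla v\cdot\bn\|_{\delta-1/2,F}\le\|\nabla v\cdot\bn\|_{\delta-1/2,\partial K}$ (up to an extension/restriction constant), handling the reduction from $\partial K$ to a single face.

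The key point is to eliminate the $\|\nabla v\|_{\delta,K}$ on the right in favor of $\|\nabla v\|_{0,K}$. Here I would apply Lemma~\ref{epsilon} componentwise (or to $\nabla v$ as a vector-valued function, the interpolation inequality (\ref{normineq}) being valid coordinatewise) with $t=0$ and the given $s$: since $\nabla v\in H^{s}(K)^d$, for $\epsilon=1$, say, there exists $\delta\in(0,\min\{s,1/2\})$, depending on $v$, with
\[
\|\nabla v\|_{\delta,K}\le 2\,\|\nabla v\|_{0,K}.
\]
Choosing this $\delta$ (shrinking it further if necessary so that it also lies below $1/2$, which is automatic from $\delta<\min\{s,1/2\}$) and inserting it into the scaled trace inequality yields
\[
\|\nabla v\cdot\bn\|_{\delta-1/2,F}\le C_\delta\left(2\|\nabla v\|_{0,K}+h_K\|\Delta v\|_{0,K}\right).
\]

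The obstacle to watch is the $\delta$-dependence of the trace constant $C_\delta$: the statement asks for a constant $C$ \emph{independent of} $\delta$, yet the classical trace constant $C_\delta$ generically blows up as $\delta\to0$ (the trace operator into $H^{-1/2+\delta}$ degenerates in the limit $\delta=0$). I would resolve this by restricting to a fixed range, e.g.\ choosing $\delta$ in a compact subinterval bounded away from $0$ if one is content with $\delta$ bounded below, or—more carefully—by proving the scaled trace inequality with a constant uniform for $\delta\in(0,1/4]$ using the interpolation characterization of $H^{\delta-1/2}(F)$ together with the normalization factor $N_\theta=\sqrt{2\sin(\pi\theta)/\pi}$ in (\ref{internorm}), which vanishes precisely as $\delta\to0$ and thereby compensates the blow-up of the raw trace map; a short argument interpolating between the $\delta=0$ bound (valid in $H^{-1/2}(F)$, i.e.\ the duality/normal-trace estimate $\|w\cdot\bn\|_{-1/2,\partial K}\le C(\|w\|_{0,K}+\|\nabla\cdot w\|_{0,K})$) and a $\delta=1/2$ bound gives a $\delta$-uniform constant. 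Once that uniform trace inequality is in hand, the rest is the elementary scaling and the application of Lemma~\ref{epsilon} described above.
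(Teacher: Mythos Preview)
Your proposal is correct and follows essentially the same approach as the paper: a fractional-order normal-trace inequality on $K$ combined with Lemma~\ref{epsilon} (applied to $\nabla v$ with $t=0$) to replace $\|\nabla v\|_{\delta,K}$ by $\|\nabla v\|_{0,K}$. The paper simply cites the trace estimate $\|\nabla v\cdot\bn\|_{\delta-1/2,F}\le C\big(\|\nabla v\|_{\delta,K}+h_K^{1-\delta}\|\Delta v\|_{0,K}\big)$ with a $\delta$-independent constant $C$ from \cite{BeHe:01,CaYeZh:09} (so your uniformity concern is resolved by citation rather than your interpolation sketch) and then absorbs the extra factor via $h_K^{-\delta}\le 2$ for sufficiently small $\delta$.
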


\begin{proof} For any $v\in V^{1+s}(K)$, 
it was shown in \cite{BeHe:01, CaYeZh:09} that for all $0<\delta <\min\{s,1/2\}$, we have
 \[
 \|\nabla v\cdot\bn\|_{\delta-1/2,F} 
 \leq C\,\left(\|\nabla v\|_{\delta,K}+h^{1-\delta}_K \|\Delta v\|_{0,K}\right),
 \]
which, together with Lemma~\ref{epsilon} with $t=0$ and the fact that $h^{-\delta}_K \leq 2$
for sufficiently small $\delta$, implies the validity of (\ref{traceepsilon}). 
This completes the proof of the lemma.
\end{proof}

\begin{lem} 
Let $F$ be a face of $K\in\cT$, $\bn_F$ the unit vector normal to $F$, and $s>0$.
Assume that $v$ is a given function in $V^{1+s}(K)$.
For any $w_h\in P_k(K)$, we have 
 \begin{eqnarray}\nonumber
 \int_F \left(\nabla v\cdot\bn_F\right)\, w_h \,ds 
  &\leq & C\, h_F^{-1/2}\|w_h\|_{0,F}
 \left(\|\nabla v\|_{0,K} + h_K\|\Delta v\|_{0,K}\right) \\[2mm] \label{tracecombined}
  & \leq  &  C\, h_K^{-1}\|w_h\|_{0,K}
 \left(\|\nabla v\|_{0,K} + h_K\|\Delta v\|_{0,K}\right).
\end{eqnarray}
\end{lem}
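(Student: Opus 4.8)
The plan is to bound the face integral $\int_F (\nabla v\cdot\bn_F)\,w_h\,ds$ by duality against the fractional-order trace norms, then absorb the $\delta$-dependence using the trace inequality already established in the previous lemma. Specifically, I would first write
\[
\int_F (\nabla v\cdot\bn_F)\,w_h\,ds
 \leq \|\nabla v\cdot\bn_F\|_{\delta-1/2,F}\,\|w_h\|_{1/2-\delta,F},
\]
using the duality pairing between $H^{\delta-1/2}(F)$ and $H^{1/2-\delta}(F)$. Here $v\in V^{1+s}(K)$ and, choosing $\delta$ as in the preceding lemma, the first factor is controlled by $C\bigl(\|\nabla v\|_{0,K}+h_K\|\Delta v\|_{0,K}\bigr)$.

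The next step is to show $\|w_h\|_{1/2-\delta,F}\leq C\,h_F^{-1/2}\|w_h\|_{0,F}$ for a polynomial $w_h\in P_k(K)$ restricted to $F$, with $C$ independent of $\delta$. Since $1/2-\delta\in(0,1/2)$, one can interpolate between $L^2(F)$ and $H^1(F)$ (or $H^{1/2}(F)$), on the reference face and then scale: an inverse inequality gives $\|w_h\|_{1,F}\leq C h_F^{-1}\|w_h\|_{0,F}$ on $P_k$, and interpolation yields $\|w_h\|_{1/2-\delta,F}\leq C\,h_F^{-(1/2-\delta)}\|w_h\|_{0,F}$. Using $h_F^{-(1/2-\delta)}=h_F^{\delta}\,h_F^{-1/2}\leq 2\,h_F^{-1/2}$ for $\delta$ small enough (the same trick as in the previous proof, noting $h_F^{\delta}\to 1$) removes the $\delta$-dependence and gives the first inequality in (\ref{tracecombined}). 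The second inequality then follows from shape-regularity, $h_F\simeq h_K$, together with the discrete trace inequality $\|w_h\|_{0,F}\leq C\,h_K^{-1/2}\|w_h\|_{0,K}$ valid for polynomials, which converts $h_F^{-1/2}\|w_h\|_{0,F}$ into $C\,h_K^{-1}\|w_h\|_{0,K}$.

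The main obstacle is the first step: making sense of $\int_F(\nabla v\cdot\bn_F)w_h\,ds$ as a genuine duality pairing when $v$ only has $H^{1+s}$ regularity with possibly small $s$, so that $\nabla v\cdot\bn_F$ lives in the negative-order space $H^{\delta-1/2}(F)$ rather than in $L^2(F)$. This requires knowing that the normal trace $\nabla v\cdot\bn_F$ is well-defined for $v\in V^{1+s}(K)$ — which is exactly the content of the previous lemma and the results of \cite{BeHe:01, CaYeZh:09} — and that the pairing is consistent with the integral notation when both sides are smooth (a density argument). Once that is in place, the rest is routine scaling, inverse estimates, and interpolation, with the only care being to track that all constants are independent of $\delta$ and of $h_K$. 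I would close by noting that the choice of $\delta$ depends on $v$ through Lemma~2.3 but that the final bound (\ref{tracecombined}) no longer references $\delta$.
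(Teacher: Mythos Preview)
Your proposal is correct and follows essentially the same route as the paper: interpret the face integral as a duality pairing between $H^{\delta-1/2}(F)$ and $H^{1/2-\delta}(F)$, bound the first factor by the preceding trace lemma, bound the second by a fractional inverse inequality $\|w_h\|_{1/2-\delta,F}\le C\,h_F^{\delta-1/2}\|w_h\|_{0,F}$, and absorb $h_F^{\delta}$ for small $\delta$; the second inequality is then the discrete trace (inverse) estimate $\|w_h\|_{0,F}\le C\,h_K^{-1/2}\|w_h\|_{0,K}$ together with $h_F\simeq h_K$. Your added remarks on making sense of the pairing via density and on the $v$-dependence of $\delta$ are consistent with the paper's treatment.
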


\begin{proof}
The second inequality in (\ref{tracecombined}) follows from the inverse inequality.
To show the validity of the first inequality in (\ref{tracecombined}), 
as discussed in \cite{CaYeZh:09}, 
$\int_F \left(\nabla v\cdot\bn_F\right)\, w_h \,ds$ may be viewed as a duality pairing 
between $H^{\delta-1/2}(F)$ and $H^{1/2-\delta}(F)$ for all $0<\delta<\min\{s,1/2\}$. 
It follows from the definition of the dual norm, the inverse inequality, and (\ref{traceepsilon}) for sufficiently small $\delta$ that
 \begin{eqnarray*}
 \int_F \left(\nabla v\cdot\bn_F\right)\, w_h \,ds 
  &\leq & \|\nabla v\cdot\bn\|_{\delta-1/2,F}\|w_h\|_{1/2-\delta,F}\\[2mm]
  & \leq & C\, h_K^{\delta-1/2}\|w_h\|_{0,F}
 \left(\|\nabla v\|_{0,K} + h_K\|\Delta v\|_{0,K}\right)\\[2mm]
  & \leq & C\, h_K^{-1/2}\|w_h\|_{0,F}
 \left(\|\nabla v\|_{0,K} + h_K\|\Delta v\|_{0,K}\right).
\end{eqnarray*}
This completes the proof of the first inequality in (\ref{tracecombined})
and, hence, the lemma.
\end{proof}

\begin{rem}
Generalizations of the above results to $\btau \in \{\btau \in H^{\a}(K)^s\,:\, \gradt \, \btau \in L^2(K)\} $ are obvious.
\end{rem}

\section{Various Finite Element Methods}
\setcounter{equation}{0}

Let $\cN$ be the set of vertices of the triangulation $\cT$ and $\cN_D$ 
be the collection of the vertices on the Dirichlet boundary.
Denote by $\cE_K$ the set of faces of element $K\in\cT$.
In this paper, face means edge/face in the two-/three-dimension.
Denote the set of all faces of the triangulation $\cT$ by
 \[
 \cE := \cE_{_I}\cup\cE_{_D},
 \]
where $\cE_{_I}$ and $\cE_{_D}$ are the respective sets of all interior and  boundary faces.  
For each $F \in \cE$, denote by  $h_F$  the diameter of the face $F$
and by $\bn_F$ a unit vector normal to $F$. 
For each interior face $F\in\cE_{_I}$, 
let $K_F^{-}$ and $K_F^{+}$ be the two elements sharing the
common face $F$ such that the unit outward normal vector of
$K_F^{-}$ coincides with $\bn_F$. When $F \in \cE_{\sD}$,
$\bn_F$ is the unit outward normal vector of $\p\O$ and denote the element by
$K_F^{-}$. For any $F\in\cE$, denote by $v|_F^{-}$ and $v|_F^{+}$,
respectively, the traces of a function $v$ over $F$.
Define jumps over faces by
$$
\jump{v}_F :=  \left\{
\begin{array}{lll}
 v|^{-}_F - v|_F^{+} \quad & F\in \cE_I,\\[2mm]
v|^{-}_F  \quad & F\in \cE_{_D}.
\end{array}
\right.
$$

Denote the continuous and discontinuous finite element spaces on the triangulation $\cT$ by
$$
V_k^c= \{ v \in H^1_0(\O)\, :\,  v|_K \in P_k(K) \;\forall\, K\in\cT\}
\,\mbox{ and }\,
D_k = \{ v \in L^2(\O) \,:\, v|_K \in P_k(K) \; \forall\, K\in\cT\},
$$
respectively. Denote the  Crouzeix-Raviart linear nonconforming finite element space by
$$
V^{cr} = \{ v \in L^2(\O) : v|_K \in P_1(K) \; \forall\, K\in\cT, \int_F \jump{v}ds =0, \; \forall F\in\cE\}
$$
and the $H(\divvr)$ conforming Raviart-Thomas finite element space by
$$
RT_k = \{ \btau \in H(\divvr;\O)\, :\, \btau|_K \in P_k(K)^d + \bx P_k(K) \; \forall\, K\in\cT\}.
$$

Denote by $\grad_h$ the discrete gradient operator that is defined element-wisely.
Then the nonconforming finite element method is to find
$u^{cr} \in V^{cr}$ such that 
\beq \label{problem_nc}
  ( \a\nabla_h u^{cr} ,\, \nabla_h v) = (f,v)  \quad\forall\,\, v\in
  V^{cr}.
\eeq

\subsection{mixed finite element method}

Introducing the flux 
 \[
 \bsigma = -\a(x)\nabla u, 
 \]
the mixed variational formulation for the problem in (\ref{scalar}) and
(\ref{bc1}) is to find
$(\bsigma,\,u)\in H(\divvr;\O)\times L^2(\O)$ such that
\begin{equation}\label{mixed}
 \left\{\begin{array}{lclll}
 (\a^{-1}\bsigma,\,\btau)-(\divv \btau,\, u)&=&0 \quad & \forall\,\, \btau \in
 H(\divvr;\O),\\[2mm]
 (\divv \bsigma, \,v) &=& (f,\,v)&\forall \,\, v\in L^2(\O).
\end{array}\right.
\end{equation}
Then the mixed finite element method is to find 
$(\bsigma^m_k,\,u^m_k) \in RT_k \times D_k$ such that
\begin{equation}\label{problem_mixed}
 \left\{\begin{array}{lclll}
 (\a^{-1}\bsigma^m_k,\,\btau)-(\divv \btau,\, u^m_k)&=&0
 \quad & \forall\,\, \btau \in RT_k,\\[2mm]
 (\divv \bsigma^m_k,\, v) &=& (f,\,v)&\forall \,\, v\in D_k.
\end{array}\right.
\end{equation}
Difference between (\ref{mixed}) and (\ref{problem_mixed}) yields 
the following error equation:
\begin{equation}\label{erroreq_mixed}
 \left\{\begin{array}{lclll}
 (\a^{-1}(\bsigma-\bsigma^m_k),\,\btau)-(\divv \btau,\, u-u^m_k)&=&0
 \quad & \forall\,\, \btau \in RT_k,\\[2mm]
 (\divv (\bsigma-\bsigma^m_k),\, v) &=& 0&\forall \,\, v\in D_k.
\end{array}\right.
\end{equation}

Let $f_{k}$ be the $L^2$ projection of $f$ onto $D_k$ for $k\geq 0$. 
Define local and global weighted oscillations by 
$$
\osc_\a(f,K) = \dfrac{h_K}{\sqrt{\a_K}}  \|f-f_{k-1}\|_{0,K} \quad\mbox{and}\quad
\osc_\a(f,\cT) = \left( \sum_{K\in\cT} \osc_\a(f,K)^2 \right)^{1/2},
$$ 
respectively.

\subsection{discontinuous Galerkin finite element method}

To describe disontinuous Galerkin finite element method, we need to introduce extra notations.
To this end, let $\o^+_F$ and $\o^-_F$ be weights defined on $F$ satisfying $w_F^+(x) + w_F^{-}(x) =1$,
and introduce the following weighted averages
\[
\{v(x)\}_w^F =  \left\{
\begin{array}{lll}
 w_F^- v^-_F+ w_F^+v^+_F  & F\in \cE_{I}, \\[2mm]
v|^{-}_F  \quad & F\in \cE_{_D}
\end{array}
\right. \mbox{ and }\, \{v(x)\}_F^w =  \left\{
\begin{array}{lll}
 w_F^+ v^-_F+ w_F^- v^+_F  & F \in \cE_{I}, \\[2mm]
0  \quad & F\in \cE_{_D}
\end{array}
\right.
\]
for all $F\in\cE$. 
A simple calculation leads to the
following identity:
 \begin{equation}\label{jump-id}
 \jump{ u v}_F = \{v\}^w_F\, \jump{u}_F + \{u\}_w^F\, \jump{v}_F.
 \eeq

For any $F\in \cE_I$, denote by $\a_F^+$ and  $\a_F^-$ the diffusion coefficients on
$K_F^+$ and $K_F^-$, respectively. 
Denote the arithmetic and the harmonic averages of $\a$ on $F \in \cE$ by
 \[
 \a_{\sF,_A} = \left\{\begin{array}{cl}
 \dfrac{\a_\sF^+  + \a_\sF^- }{2} &\quad F \in \cE_{\sI},\\[4mm]
 \a_\sF^-   &\quad F \in \cE_{\sD}
 \end{array}\right.
 \quad \mbox{and}\quad
 \a_{\sF,_H} = \left\{\begin{array}{cl}
  \dfrac{2\, \a_\sF^+  \a_\sF^- }{\a_\sF^+ + \a_\sF^-},&\quad F \in \cE_{\sI},\\[4mm]
 \a_\sF^-   &\quad F \in \cE_{\sD},
 \end{array}\right.
 \]
 respectively, which are equivalent to the respective maximum and minimum of $\a$:
\beq\label{a-h}
 \dfrac{1}{2}\max\{\a_\sF^+, \a_\sF^- \}
 \leq \a_{\sF,_A} 
 \leq \max\{\a_\sF^+, \a_\sF^- \}
 \, \mbox{ and } \, 
 \min\{\a_\sF^+, \a_\sF^- \}
 \leq \a_{\sF,_H} 
 \leq \dfrac{1}{2} \min\{\a_\sF^+, \a_\sF^- \} .
\eeq

For $s>0$, let 
 \begin{eqnarray*}
 H^{1+s}(\cT) &=& \{ v\in L^2(\O)\, :\, v|_K\in  H^{1+s}(K)\,\, \forall\, K\in \cT\}\\[2mm]
 \mbox{and }\,
 V^{1+s}(\cT) &=&\{ v \in H^{1+s}(K)\, :\, (\Delta v)|_K\in L^2(K)\,\, \forall\, K\in \cT \}.
 \end{eqnarray*}
In \cite{CaYeZh:09} we introduced the following variational formulation for the interface 
problem in (\ref{scalar}) and (\ref{bc1}): find $u\in V^{1+\epsilon}(\cT)$ with $\epsilon >0$
 such that
 \beq\label{DGV}
 a_{dg}(u,\,v) = (f,\,v)\quad\forall\,\, v \in V^{1+\epsilon}(\cT),
\eeq
where the bilinear form $a_{dg}(\cdot,\,\cdot)$ is given by
\begin{eqnarray*}
a_{dg}(u,v)&=&(\a\nabla_h u,\nabla_h v)
 +\sum_{F\in\cE}\int_F\gamma  \dfrac{\aH}{h_F} \jump{u}
 \jump{v}\,ds \\[2mm] \nonumber
&&\quad -\sum_{F\in\cE}\int_F\{\a\nabla
u\cdot\bn_F\}_{w}^F \jump{v}ds -
\sum_{F\in\cE}\int_F \{\a\nabla
v\cdot\bn_F\}_{w}^F\jump{u}ds.
\end{eqnarray*}
The $\gamma$ is a positive constant only depending on the shape of elements.
In order to guarantee robust error estimate with respect to $\a$, 
we choose the following harmonic weights:
 \beq\label{weights}
 w_F^\pm = \dfrac{\a_F^\mp}{\a_F^-+\a_F^+}.
 \eeq
The discontinuous Galerkin finite element method  is then to
seek $u^{dg}_k \in D_k$ such that
\beq\label{DGA}
a_{dg}(u^{dg}_k,\, v) = (f,\,v)\quad \forall\, v\in D_k.
\eeq
Difference between (\ref{DGV}) and (\ref{DGA}) leads to the following error equation
\beq \label{erroreq_dg}
a_{dg}(u-u^{dg}_k,\,v) = 0 \quad \forall \, v\in D_k.
\eeq
For simplicity, we consider only this symmetric version of the interior penalty discontinuous Galerkin finite element method 
since its extension to other versions of discontinuous Galerkin approximations is straightforward.
Define the jump semi-norm and the DG norm by 
$$
\|v\|_{J,F} = \sqrt{\dfrac{\aH}{h_F}} \|\jump{v}\|_{0,F}
\quad\mbox{and}\quad
\tri v\tri_{dg} = \left(\|\a^{1/2}\nabla_h v\|_{0,\O}^2+\sum_{F\in \cE}\|v\|_{J,F} ^2 \right)^{1/2},
$$
respectively, for all $v\in  H^1(\cT)$. It was shown in \cite{CaYeZh:09} that there exists
a positive constant $C$ independent of the jump of $\a$ such that
\beq \label{DGcoer}
C\, \tri v \tri_{dg}^2\leq a_{dg}(v,\, v) \quad \forall v \in D_k.
\eeq

\section{Robust and Local Optimal A Priori Error Estimates}
\setcounter{equation}{0}

\subsection{CR nonconforming finite element method}
Let 
 \begin{eqnarray*}
  W^{1,1}(\cT) &=&\{v\in L^2(\O)\, : \, v|_K\in W^{1,1}(K)\,\,\,\forall\, K\in \cT\}\\[2mm]
 \mbox{and}\quad 
 W (\cT) &=& \{v\in W^{1,1}(\cT)\, : \, \int_{F} \jump{v} ds =0\,\,\, \forall\, F\in \cE \}.
 \end{eqnarray*}
Denote by $\theta_F (\bx)$ the nodal basis function of $V^{cr}$ associated with the face
$F\in\cE$, i.e., 
 \[
 \dfrac{1}{|F^\prime|} \int_{F^\prime} \theta_F (\bx)\, ds
 =\delta_{FF^\prime}
 \,\,\,\forall\, F^\prime\in\cE,
 \]
where $\delta_{FF^\prime}$ is the Kronecker delta. The local and global Crouzeix-Raviart interpolants are defined respectively by
 \[
  I^{cr}_K v = \sum_{F\in \cE \cap \partial K} \left(\dfrac{1}{|F|} \int_{F} v ds \right) \theta_F(\bx)
 \quad\mbox{and}\quad
  I^{cr} v = \sum_{F\in \cE } \left(\dfrac{1}{|F|} \int_{F} v ds \right) \theta_F(\bx)
 \]
for the respective $v\in W^{1,1}(K)$ and $v\in W (\cT)$.
It was shown (see, e.g., Theorem 1.103 and Example 1.106 (ii) of \cite{ErGu:04}) that
for $v\in H^{1+t}(K)$ with $0\leq t  \leq 1$
\beq \label{localcr}
 \|v-I^{cr}_K v\|_{0,K} 
 \leq C \, h_{K}^{1+t}\, |\nabla v|_{t,K}. 
\eeq

\begin{thm}\label{apriori_nc}
Let $u$ be the soluion of {\em (\ref{galerkin})} and $u_K$ be its restriction on $K\in\cT$.
Assume that $u\in H^{1+s}(\O)\cap V^{1+s}(\cT)$ for some $s>0$ and 
that $u|_K\in H ^{1+s_K}(K)$ with element-wise defined $s_K>0$ for all $K\in\cT$.
Let $u^{cr}\in V^{cr}$ be the nonconforming finite element approximation in 
{\em (\ref{problem_nc})}. For both the two- and three-dimension, the following error estimates,
\begin{eqnarray} \nonumber
 \|\a^{1/2}\nabla_h(u-u^{cr})\|_{0} 
 &\leq&  C\, \left(\inf_{v\in V^{cr}}\|\a^{1/2}\nabla_h(u-v)\|_{0} + \osc_\a(f,\cT)\right)
 \\ [2mm] \label{ceacr}
 & \leq& C\, \left( \left( \sum_{K\in\cT} h_K^{2\min\{1,s_K\}}|\a^{1/2}\nabla u|^2_{s_K,K}
    \right)^{1/2} + \osc_\a(f,\cT) \right) 
\end{eqnarray}
hold, where $C$ is a positive constant independent of the jump of the diffusion
coefficient $\a$.
\end{thm}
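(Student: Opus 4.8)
The plan is to prove the first inequality of~(\ref{ceacr}) by a second‑Strang‑lemma argument and the second by the interpolation estimate~(\ref{localcr}); the robust bound on the consistency error carries all the weight and is exactly where Lemmas~2.3 and~2.4 are needed. Fix $v\in V^{cr}$ and set $e_h=v-u^{cr}\in V^{cr}$. Since $\|\a^{1/2}\nabla_h(u-u^{cr})\|_0\le\|\a^{1/2}\nabla_h(u-v)\|_0+\|\a^{1/2}\nabla_h e_h\|_0$, it suffices to bound $\|\a^{1/2}\nabla_h e_h\|_0$. Testing~(\ref{problem_nc}) with $e_h$ gives $\|\a^{1/2}\nabla_h e_h\|_0^2=(\a\nabla_h(v-u),\nabla_h e_h)+\Lambda(e_h)$, where $\Lambda(e_h):=(\a\nabla_h u,\nabla_h e_h)-(f,e_h)$, and since the first term is $\le\|\a^{1/2}\nabla_h(v-u)\|_0\|\a^{1/2}\nabla_h e_h\|_0$, the whole problem reduces to proving
\[
|\Lambda(e_h)|\le C\Big(\inf_{w\in V^{cr}}\|\a^{1/2}\nabla_h(u-w)\|_0+\osc_\a(f,\cT)\Big)\|\a^{1/2}\nabla_h e_h\|_0 ,
\]
after which $\|\a^{1/2}\nabla_h e_h\|_0$ is absorbed and the infimum over $v$ is taken.

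Next I would rewrite $\Lambda$ over faces. Since $\a$ is piecewise constant and $u\in V^{1+s}(\cT)$, one has $-\a_K\Delta u=f$ a.e.\ in each $K$; since $\a\nabla u\in H(\divvr;\O)$, its normal trace is single‑valued across every interior face. Integrating by parts element by element and collecting interface terms gives $\Lambda(e_h)=\sum_{F\in\cE}\int_F(\a\nabla u\cdot\bn_F)\jump{e_h}_F\,ds$, and the structural fact to be exploited is $\int_F\jump{e_h}_F\,ds=0$ for every $F$, valid because $e_h\in V^{cr}$.

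The heart of the proof is the robust bound of these face terms. For an interior face $F$ shared by $K^\pm$ with coefficients $\a^\pm$, write $\jump{e_h}_F=(e_h|_{K^-}-\overline{e_h}^F)-(e_h|_{K^+}-\overline{e_h}^F)$, where $\overline{e_h}^F$ is the (two‑sided identical, by the zero‑mean property) face average. For the $K^-$–contribution use $\a\nabla u\cdot\bn_F=\a^-\nabla u|_{K^-}\cdot\bn_F$ and, exploiting that $e_h|_{K^-}-\overline{e_h}^F$ has zero mean on $F$, subtract from $u$ on $K^-$ an affine function whose gradient is the $L^2(K^-)$–best constant approximation of $\nabla u$, together with the radial quadratic $q_{K^-}$ determined by $-\a^-\Delta q_{K^-}=f_{k-1}|_{K^-}$ and $\nabla q_{K^-}(\bar x_{K^-})=0$; both of these have normal derivative constant on $F$, hence pair to zero against the zero‑mean jump. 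The remainder $\rho:=u-q_{K^-}-(\text{affine})$ satisfies $-\a^-\Delta\rho=f-f_{k-1}$ in $K^-$, so applying Lemma~2.4 to $\rho$ (in the version obtained from Lemma~2.3 after subtracting a constant, and using Lemma~\ref{epsilon} for the fractional exponent) together with $\|e_h|_{K^-}-\overline{e_h}^F\|_{0,F}\le Ch_{K^-}^{1/2}\|\nabla e_h\|_{0,K^-}$ bounds the $K^-$–part of the $F$–term by
\[
C\Big(h_{K^-}^{\min\{1,s_{K^-}\}}|\a^{1/2}\nabla u|_{s_{K^-},K^-}+\osc_\a(f,K^-)\Big)\|\a^{1/2}\nabla e_h\|_{0,K^-},
\]
where the factor $\a^-$ has been distributed evenly between $\nabla\rho$ and $\nabla e_h$. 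Robustness is automatic here because each face contribution has been charged to a single element and weighted by that element's coefficient only. Treating $K^+$ symmetrically (and Dirichlet faces with $K^+$ absent), summing over $F$, and applying the Cauchy--Schwarz inequality bounds $|\Lambda(e_h)|$ by $C(\,(\sum_K h_K^{2\min\{1,s_K\}}|\a^{1/2}\nabla u|^2_{s_K,K})^{1/2}+\osc_\a(f,\cT)\,)\|\a^{1/2}\nabla_h e_h\|_0$, which is stronger than required.

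For the second inequality, $I^{cr}u$ is well defined since $u\in W(\cT)$; taking $v=I^{cr}u$ and applying the gradient form of~(\ref{localcr}) on each $K$ with $t=\min\{1,s_K\}$, then multiplying by $\a_K^{1/2}$ (constant on $K$, hence absorbed into the seminorm) and summing, gives $\inf_{w\in V^{cr}}\|\a^{1/2}\nabla_h(u-w)\|_0\le C(\sum_{K}h_K^{2\min\{1,s_K\}}|\a^{1/2}\nabla u|^2_{s_K,K})^{1/2}$. The main obstacle is the third step: it is exactly there that all previous robust estimates required the quasi‑monotonicity assumption, because a crude use of a trace inequality on $\a\nabla u\cdot\bn_F$ yields the non‑converging residual $\tfrac{h_K}{\sqrt{\a_K}}\|f\|_{0,K}$ rather than $\osc_\a(f,K)$, and a crude splitting of $\jump{e_h}_F$ couples $\a^+$ with $\a^-$ and ruins robustness. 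The combination that cures both --- and which I expect to require the most care to make rigorous at the low regularities $s\in(0,1/2]$ allowed in three dimensions --- is the zero‑mean property of the Crouzeix--Raviart jumps (which both annihilates the piecewise‑constant‑data part and allows element‑wise subtraction of averages), the sharp trace inequalities of Lemmas~2.3 and~2.4 (which make the duality pairings meaningful for $u$ of arbitrarily low regularity), and the comparability $\aH\le\min\{\a^+,\a^-\}$ used in charging each face to its smaller‑coefficient neighbour.
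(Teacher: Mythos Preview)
Your overall strategy---Strang's lemma, integration by parts to face terms, exploiting the zero-mean property of CR jumps to subtract constants on each face---matches the paper's. But there are two genuine gaps.

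First, the quadratic correction $q_{K^-}$ does not achieve what you claim. It does make $-\a^-\Delta\rho=f-f_{0}$, so the term $h_{K^-}\|\Delta\rho\|_{0,K^-}$ coming from Lemma~2.4 becomes oscillation. But $q_{K^-}$ also enters $\|\nabla\rho\|_{0,K^-}$: since $\nabla q_{K^-}$ is linear with $\|\nabla q_{K^-}\|_{0,K^-}\sim h_{K^-}\|f_0\|_{0,K^-}/\a^-$, after distributing $\a^-$ you pick up a contribution $h_{K^-}\a_{K^-}^{-1/2}\|f_0\|_{0,K^-}$, which is \emph{not} oscillation. Your stated bound simply omits this term.

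Second, and more importantly, even granting your bound, it yields the \emph{second} inequality in~(\ref{ceacr}) but not the first. The phrase ``stronger than required'' has the direction backwards: since
\[
\inf_{w\in V^{cr}}\|\a^{1/2}\nabla_h(u-w)\|_{0}\ \le\ \Big(\sum_{K}h_K^{2\min\{1,s_K\}}|\a^{1/2}\nabla u|^2_{s_K,K}\Big)^{1/2},
\]
a bound of $|\Lambda(e_h)|$ by the right-hand side does \emph{not} imply a bound by the left-hand side. The robust C\'{e}a-type inequality is the main novelty of the theorem, and your argument as written does not establish it.

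The paper closes both gaps with a device you do not use. Instead of subtracting a specific affine-plus-quadratic, it subtracts an \emph{arbitrary} $v\in V^{cr}$ (whose face-normal derivative is likewise constant on each $F$, hence pairs to zero against $w-\bar w_F$). Lemma~2.4 then gives, element by element, a bound by $\|\a^{1/2}\nabla(u-v)\|_{0,K}+h_K\a_K^{-1/2}\|f\|_{0,K}$. The first summand is already of C\'{e}a type; the second is absorbed via the \emph{efficiency bound} of the residual estimator (in the spirit of Lemma~5.2 of~\cite{CaYeZh:09}, valid without the QMA):
\[
h_K\,\a_K^{-1/2}\|f\|_{0,K}\ \le\ C\big(\|\a^{1/2}\nabla_h(u-v)\|_{0,\omega_K}+\osc_\a(f,K)\big)\qquad\forall\,v\in V^{cr}.
\]
This efficiency-bound step---absent from your proposal---is precisely what converts the raw residual $h_K\a_K^{-1/2}\|f\|_{0,K}$ into best-approximation-plus-oscillation and delivers the first inequality. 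Your quadratic trick is an attempt to sidestep it, but (as noted above) it only shifts the same $h_K\a_K^{-1/2}\|f_0\|_{0,K}$ term from $\|\Delta\rho\|$ into $\|\nabla\rho\|$.
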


\begin{proof}
The second inequality in (\ref{ceacr}) is an immediate consequence of (\ref{localcr}).
By Strang's lemma, to show the validity of the first inequality in  (\ref{ceacr}), it suffices to
prove
 \beq\label{4.9}
 \sup_{w\in V^{cr}}\dfrac{|(f,\,w)-(\a\nabla u,\,\nabla_h w)|}{\|a^{1/2}\nabla_h w\|_{0,\O}}
 \leq  C\, \left(\inf_{v\in V^{cr}}\|a^{1/2}\nabla_h(u-v)\|_{0,\O} + \osc_\a(f,\cT)\right).
\eeq

To this end, for any $w\in V^{cr}$ and any $F\in\cE$, by the fact that 
$\int_F \jump{w} ds =0$, the mean value of $w$ over $F$ is single-valued constant, i.e., 
 \[
 \bar{w}_{F} = \dfrac{1}{|F|}\,\int_F w|_{K^+_F}\,ds
   = \dfrac{1}{|F|}\,\int_F w|_{K^-_F}\,ds,
 \]
where $K^+_F$ and $K^-_F$ are two elements sharing the common face $F$. Moreover,
$\bar{w}_{F}=0$ for $F\in \cE_D$. 
Hence, by the continuity of the flux $\bn\cdot \a\nabla u$ across face $F\in \cE_I$, we have
 \beq\label{4.4}
 \sum_{K\in\cT}\sum_{F\in \p K}\int_F
 \big(\bn\cdot \a\nabla u\big)\, \bar{w}_{F}\, ds 
 = \sum_{F\in\cE}\int_{F} \jump{(\bn\cdot a\nabla u)\, \bar{w}_F}\, ds
 =0.
 \eeq
Now, it follows from (\ref{problem_nc}), integration by parts, (\ref{4.4}),
the fact that $\left(\bn_F\cdot\a \nabla v|_K\right)_F$ is a constant, and (\ref{tracecombined})
 that for all $v \in V^{cr}$
\begin{eqnarray*}
  (\a\nabla u,\,\nabla_h w)-(f,\,w)
   &=& \sum_{K\in\cT}\int_{\p K} (\bn\cdot a\nabla u)\, w\, ds
 =\sum_{K\in\cT}\sum_{F\in \p K}\int_F
  (\bn\cdot \a\nabla u)\, (w-\bar{w}_{F})\, ds \\[2mm]
  &=& \sum_{K\in\cT}\sum_{F\in \p K}\int_F
 \big(\bn\cdot \a\nabla (u-v)\big)\, (w-\bar{w}_{F})\, ds \\[2mm]
 &\leq & C\,\sum_{K\in\cT}\sum_{F\in \p K} 
 h^{-1/2}_K\|w-\bar{w}_{F}\|_{0,F}
 \left(\|\a\,\nabla (u-v)\|_{0,K} +  h_{K} \|f\|_{0,K}\right) 
  \\[2mm]
 &\leq & C\,\sum_{K\in\cT}\sum_{F\in \p K}
 \left(\|\a^{1/2}\nabla (u-v)\|_{0,K} +  h_{K}\a_K^{-1/2}\|f\|_{0,K}\right) 
  \,\|\a^{1/2}\nabla w\|_{0,K}.
\end{eqnarray*}
The last inequality is due to the fact that 
$ \|w-\bar{w}_{F}\|_{0,F} \leq C\, h_{K}^{1/2} \|\nabla w\|_{0,K}$. Now, the Cauchy-Schwarz inequality  gives
 \[
  \dfrac{\big| (\a\nabla u,\,\nabla_h w)-(f,\,w)\big|}
    {\|a^{1/2}\nabla_h w\|_{0,\O}}
 \leq  C\,\left(\inf_{v\in V^{cr}}\|\a^{1/2}\nabla_h(u-v)\|_0 
 + \left( \sum_{K\in\cT} h_{K}^{2}\a_{K}^{-1} \|f\|_{0,K}^{2}
    \right)^{1/2}  \right) 
 \]
for all $w\in V^{cr}$. Without QMA, in a similar fashion as the proof of the efficiency bound for the residual
error estimator of discontinuous Galerkin finite element method (Lemma 5.2 of \cite{CaYeZh:09}), we have
 \[
 h_K \a_K^{-1/2} \|f\|_{0,K} \leq C\, \left(\|\a^{1/2}\nabla_h (u-v)\|_{0,\o_K} + \osc_\a(f,K)\right)
 \] 
for all $v\in V^{cr}$ and all $K\in\cT$. Combining the above two inequalities
implies the validity of (\ref{4.9}). This completes the proof of
the theorem.
\end{proof}

Since linear conforming finite element solution $u^{c}_{1}\in V^{cr}$, we have
 \[
 \inf_{v\in V^{cr}}\|\a^{1/2}\nabla_h(u-v)\|_{0} 
 \leq \|\a^{1/2}\nabla_h(u-u^{c}_{1})\|_{0,\O},
\]
which, together with Theorem~\ref{apriori_nc}, implies the following robust
comparison result between linear conforming finite element and Crouzeix-Raviart nonconforming finite element approximations.

\begin{cor}\label{C-CRwithout}
Without QMA,  there exists a positive constant $C$ independent of the jump of the diffusion 
coefficient such that
\[
 \|\a^{1/2}\nabla_h(u-u^{cr})\|_{0,\O}
 \leq  C \left(\|\a^{1/2}\nabla_h (u-u^{c}_{1})\|_{0,\O}
+ \osc_\a(f,\cT)\right).
\]
\end{cor}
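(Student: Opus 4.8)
The plan is to obtain the corollary as an immediate consequence of Theorem~\ref{apriori_nc} by choosing one convenient competitor in the infimum over $V^{cr}$. First I would record the set inclusion $V^c_1\subset V^{cr}$: any $v\in V^c_1$ is globally continuous and vanishes on $\partial\O$, so its traces on interior faces are single-valued and its trace on each boundary face is zero; in particular $\int_F\jump{v}\,ds=0$ for every $F\in\cE$, which is exactly the defining constraint of $V^{cr}$. Moreover, since $v\in H^1_0(\O)$, the broken gradient agrees with the usual one, $\nabla_h v=\nabla v$, so the norm $\|\a^{1/2}\nabla_h(u-v)\|_0$ in Theorem~\ref{apriori_nc} reduces to $\|\a^{1/2}\nabla(u-v)\|_{0,\O}$ when $v=u^c_1$.

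Second, because $u^c_1\in V^{cr}$, the definition of the infimum gives at once
\[
\inf_{v\in V^{cr}}\|\a^{1/2}\nabla_h(u-v)\|_{0}\ \le\ \|\a^{1/2}\nabla_h(u-u^c_1)\|_{0,\O}.
\]
Third, I would invoke the first inequality in \eqref{ceacr} of Theorem~\ref{apriori_nc} — whose constant is already shown to be independent of the jump of $\a$ and which holds without the QMA — and substitute the bound just obtained for the infimum term. This produces
\[
\|\a^{1/2}\nabla_h(u-u^{cr})\|_{0}\ \le\ C\Big(\|\a^{1/2}\nabla_h(u-u^c_1)\|_{0,\O}+\osc_\a(f,\cT)\Big),
\]
which is the claimed estimate.

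Since the argument is a two-line deduction from the theorem, there is no real obstacle here; the only point needing a moment's care is the verification of $V^c_1\subset V^{cr}$ together with the remark that the broken gradient of a conforming function is its ordinary gradient, so that the quantity $\|\a^{1/2}\nabla_h(u-u^c_1)\|_{0,\O}$ appearing in the statement is precisely the conforming energy error already estimated in Corollary~\ref{optimal2d}. One should also note that the regularity hypotheses on $u$ (namely $u\in H^{1+s}(\O)\cap V^{1+s}(\cT)$ with element-wise $s_K>0$) are those of Theorem~\ref{apriori_nc} and serve as the standing assumptions, so no additional smoothness is required.
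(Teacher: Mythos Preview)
Your proposal is correct and follows essentially the same route as the paper: the paper simply notes that $u^{c}_{1}\in V^{cr}$, bounds the infimum in the first inequality of \eqref{ceacr} by $\|\a^{1/2}\nabla_h(u-u^{c}_{1})\|_{0,\O}$, and concludes. Your additional remarks verifying $V^{c}_{1}\subset V^{cr}$ and $\nabla_h v=\nabla v$ for conforming $v$ are harmless elaborations of the same one-line argument.
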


\subsection{RT mixed finite element method}
 
For fixed $s>0$, denote by $I^{rt}_k: \Hdiv \cap [H^s(\O)]^d \mapsto RT_k$ 
the standard $RT$ interpolation operator satisfying 
the following approximation property: for $\btau \in H^{s_\sK}(K)$
\beq \label{rti}
  \|\btau - I^{rt}_k \btau\|_{0,K}
  \leq C h_K^{\min\{s_K,k+1\}} |\btau|_{s_K,K} \quad\forall\,\, K\in \cT.
\eeq
(The estimate in (\ref{rti}) is standard for $s_\sK\geq 1$ and may be proved by 
the average Taylor series developed in \cite{DuSc:80} and the standard reference 
element technique with Piola transformation for $0<s_K<1$.) Denote by 
$Q_k:  L^2 (\O) \mapsto D_k$ the $L^2$-projection onto $D_k$.
The following commutativity property is well-known:
\beq\label{comm}
 \gradt (I^{rt}_k\,\btau)=Q_k\,\gradt\btau \qquad
 \forallqq\,\btau\in\Hdiv \cap H^s(\O)^d \,\mbox{ with }\, s>0.
\eeq

\begin{rem}
We use $\Hdiv \cap [H^s(\O)]^d$ instead of the choice 
$\{\btau\in L^p(\O)^d\mbox{  and  }\gradt \btau \in L^2(\O)\}$ for $p>2$ or $W^{1,t}(K)$ for $t>2d/(d+2)$ in  {\em \cite{RT:77,BrFo:91, BBF:13}} because this choice is more suitable for our analysis.
\end{rem}

\begin{thm}\label{apriori_mixed} 
Let $u$ and $(\bsigma^m_k,\,u^m_k)$ be the solutions of 
{\em (\ref{galerkin})} and {\em (\ref{problem_mixed})}, respectively.
Assume that $u\in H^{1+s}(\O)$ with $s>0$ and that $u|_K\in H ^{1+s_K}(K)$ with element-wise defined $s_K>0$
for all $K\in\cT$. Then
there exists a constant $C>0$ independent of the jump of $\a$ for both the
two- and three-dimension such that
\begin{equation}\label{err-bound-mixed}
\|\a^{-1/2}(\bsigma -\bsigma^{m}_k)\|_{0,\O} 
\leq \|\a^{-1/2}(\bsigma-I^{rt}_k \bsigma)\|_{0,\O} 
\leq C\left( \sum_{K\in\cT} h_K^{\min\{s_K,k+1\}} \a|\nabla u|_{s_K,K}\right)^{1/2}.
\end{equation}
\end{thm}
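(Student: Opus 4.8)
The plan is to establish the two inequalities in (\ref{err-bound-mixed}) separately. The right-hand inequality is immediate: applying the $RT$ interpolation estimate (\ref{rti}) elementwise to $\bsigma = -\a\nabla u$ gives $\|\a^{-1/2}(\bsigma - I^{rt}_k\bsigma)\|_{0,K}^2 = \a_K^{-1}\|\bsigma - I^{rt}_k\bsigma\|_{0,K}^2 \le C\,\a_K^{-1} h_K^{2\min\{s_K,k+1\}}|\bsigma|_{s_K,K}^2 = C\,\a_K h_K^{2\min\{s_K,k+1\}}|\nabla u|_{s_K,K}^2$, and summing over $K$ and taking square roots yields the bound (up to fixing the apparent typo that the right-most quantity should be squared under the sum). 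So the real content is the left-hand inequality, a best-approximation (quasi-optimality) statement for the flux in the weighted $L^2$ norm.

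For that, the key observation is that the commutativity property (\ref{comm}) combined with the second equation of the error system (\ref{erroreq_mixed}) forces $I^{rt}_k\bsigma - \bsigma^m_k$ to be divergence-free in a discrete sense. Concretely: from $(\divv(\bsigma - \bsigma^m_k), v) = 0$ for all $v\in D_k$ and $\divv(I^{rt}_k\bsigma) = Q_k(\divv\bsigma)$, one gets $(\divv(I^{rt}_k\bsigma - \bsigma^m_k), v) = (Q_k\divv\bsigma - \divv\bsigma^m_k, v) = (\divv\bsigma - \divv\bsigma^m_k, v) = 0$ for all $v\in D_k$; since $\divv(I^{rt}_k\bsigma - \bsigma^m_k)\in D_k$, it vanishes identically. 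Then I would test the first equation of (\ref{erroreq_mixed}) with $\btau = I^{rt}_k\bsigma - \bsigma^m_k \in RT_k$: the term $(\divv\btau, u - u^m_k)$ drops out because $\divv\btau = 0$, leaving $(\a^{-1}(\bsigma - \bsigma^m_k), I^{rt}_k\bsigma - \bsigma^m_k) = 0$. This is the standard orthogonality that makes $\bsigma^m_k$ the weighted-$L^2$ projection of $\bsigma$ onto the relevant subspace.

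From the orthogonality I would write $\|\a^{-1/2}(\bsigma - \bsigma^m_k)\|_0^2 = (\a^{-1}(\bsigma - \bsigma^m_k), \bsigma - \bsigma^m_k) = (\a^{-1}(\bsigma - \bsigma^m_k), \bsigma - I^{rt}_k\bsigma)$, and then apply Cauchy--Schwarz in the $\a^{-1/2}$-weighted inner product to conclude $\|\a^{-1/2}(\bsigma - \bsigma^m_k)\|_0 \le \|\a^{-1/2}(\bsigma - I^{rt}_k\bsigma)\|_0$. Chaining this with the interpolation estimate above completes the proof. I do not expect any serious obstacle here — as the paper itself notes, the mixed-element analysis is "rather straightforward," and robustness with respect to the jumps of $\a$ comes for free because the coefficient weight $\a^{-1}$ sits inside the bilinear form and is handled exactly by the projection argument, with no quasi-monotonicity or global-regularity hypothesis needed. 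The only mild care point is making sure the $RT$ interpolant $I^{rt}_k\bsigma$ is well-defined, which is exactly why the hypothesis $u\in H^{1+s}(\O)$ (so $\bsigma\in H^s(\O)^d$ with $s>0$) and the choice of domain $\Hdiv\cap[H^s(\O)]^d$ flagged in the preceding remark are in place.
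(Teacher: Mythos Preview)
Your argument is correct and follows essentially the same route as the paper: both use the commutativity property (\ref{comm}) to conclude that $\divv(I^{rt}_k\bsigma-\bsigma^m_k)=0$, test the first error equation in (\ref{erroreq_mixed}) with $\btau=I^{rt}_k\bsigma-\bsigma^m_k$ to obtain the orthogonality $(\a^{-1}(\bsigma-\bsigma^m_k),\,I^{rt}_k\bsigma-\bsigma^m_k)=0$, and finish with Cauchy--Schwarz; the second inequality is the elementwise application of (\ref{rti}) in both cases. Your observation about the missing square under the sum on the right-hand side of (\ref{err-bound-mixed}) is also correct.
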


\begin{proof}
The second inequality in (\ref{err-bound-mixed}) is a direct consequence of the local
approximation property in (\ref{rti}). To establish the first inequality 
in (\ref{err-bound-mixed}), denote by 
 \[
  \bE = \bsigma -\bsigma^{m}_k
   \quad\mbox{and}\quad
   e = u- u^{m}_{k}
   \]
the respective errors of the flux and the solution.
The commutativity property in (\ref{comm}) and the second equations in 
(\ref{mixed}) and (\ref{problem_mixed}) lead to
 $$
 \gradt (I_k^{rt}\bsigma) = Q_k\,\gradt\bsigma = Q_k f = \divv
 \bsigma^{m}_k.
 $$
Now, it follows from the first equation in (\ref{erroreq_mixed}) and
the Cauchy-Schwarz inequality that
\begin{eqnarray*}
  \|\a^{-1/2}\bE\|_{0,\O}^2
  &= & (\a^{-1}\bE,\, \bsigma-I_k^{rt} \bsigma) +
  (\a^{-1}\bE,\, I_k^{rt} \bsigma-\bsigma^{m}_k)\\
  &=&(\a^{-1}\bE,\, \bsigma-I_k^{rt} \bsigma) +
  (\divv(I_k^{rt}  \bsigma-\bsigma^m_k),\,e)\\
  &=&(\a^{-1}\bE,\, \bsigma-I_k^{rt} \bsigma)
  \leq \|\a^{-1/2}\bE\|_{0,\O}\,\|\a^{-1/2}(\bsigma-I_k^{rt} \bsigma)\|_{0,\O},
\end{eqnarray*}
which implies the first inequality in (\ref{err-bound-mixed}). This completes the proof
of the theorem.
\end{proof}

\subsection{discontinuous Galerkin finite element method}

\begin{thm}\label{apriori_dg}
Let $u$ be the soluion of {\em (\ref{galerkin})} and $u|_K$ be its restriction on $K\in\cT$.
Assume that $u\in H^{1+s}(\O) \cap V^{1+s}(\cT)$ with $s>0$ and that $u|_K\in H ^{1+s_K}(K)$ with element-wise defined $s_K>0$
for all $K\in\cT$.
Let $u^{dg}_k\in D_k$ be the discontinuous
Galerkin finite element approximation in 
{\em (\ref{DGA})}. Let 
\beq
 app_\a(f,K) = \left\{\begin{array} {lll}
 \dfrac{h_K}{\sqrt{\a_K}} \|f-f_0\|_{0,K},  & \mbox{if  } 0<s_K <1, \\[4mm]
  h_K^ {\min \{k,s_K\} } \a_K^{1/2}|\nabla u|_{s_K,K}, & \mbox{if  } s_K \geq 1.
 \end{array}
 \right.
\eeq
In both the two- and three-dimension, we have the following error estimates:
\begin{eqnarray} \nonumber
 \tri u-u^{dg}_k\tri_{dg} &\leq& 
C\, \left(\inf_{v\in D_k}\tri u-v\tri_{dg} 
+ \osc_\a(f,\cT)\right)
\\ [2mm]  \label{ceadg}
& \leq& C 
 \left(\sum_{K\in\cT} h_K^{2s_K}|\a^{1/2}\nabla u|^2_{s_K,K} + app_\a(f,K)^2
    \right)^{1/2},
\end{eqnarray}
where $C$ is a positive constant independent of the jump of the diffusion
coefficient $\a$. 
\end{thm}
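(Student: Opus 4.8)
The plan is to mimic the structure of the nonconforming case (Theorem~\ref{apriori_nc}) but now working with the DG norm $\tri\cdot\tri_{dg}$ and the bilinear form $a_{dg}(\cdot,\cdot)$. The second inequality in (\ref{ceadg}) will follow by choosing $v$ to be a suitable interpolant of $u$ in $D_k$ (the $L^2$ projection $Q_k u$, or better the nodal/Cl\'ement-type interpolant), estimating $\|\a^{1/2}\nabla_h(u-v)\|_{0,K}$ by the standard local approximation bound $h_K^{s_K}|\a^{1/2}\nabla u|_{s_K,K}$, and estimating each jump term $\|u-v\|_{J,F} = \sqrt{\aH/h_F}\,\|\jump{u-v}\|_{0,F}$ via a trace inequality together with the harmonic-weight equivalence (\ref{a-h}); when $s_K\geq 1$ the $app_\a(f,K)$ term is exactly the approximation contribution, while for $0<s_K<1$ it is the oscillation $\osc_\a(f,K)$ coming from replacing $\|f\|$-type local terms by $\|f-f_0\|$ as in Lemma 5.2 of \cite{CaYeZh:09}. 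So the heart of the matter is the \emph{first} inequality in (\ref{ceadg}), i.e., a robust C\'ea-type estimate for the DG method without QMA.

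For the first inequality I would invoke the coercivity (\ref{DGcoer}) and the error equation (\ref{erroreq_dg}). For arbitrary $v\in D_k$ write $u-u^{dg}_k = (u-v) + (v-u^{dg}_k)$ with $w:=v-u^{dg}_k\in D_k$, so $C\tri w\tri_{dg}^2 \leq a_{dg}(w,w) = a_{dg}(u-u^{dg}_k,w) - a_{dg}(u-v,w) = -a_{dg}(u-v,w) + a_{dg}(u-u^{dg}_k,w)$, and by (\ref{erroreq_dg}) the last term vanishes since $w\in D_k$. Thus $C\tri w\tri_{dg}^2 \le |a_{dg}(u-v,w)|$. I would then bound $|a_{dg}(u-v,w)|$ term by term. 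The volume term $(\a\nabla_h(u-v),\nabla_h w)$ and the penalty term $\sum_F \gamma(\aH/h_F)\int_F \jump{u-v}\jump{w}$ are controlled directly by $\tri u-v\tri_{dg}\,\tri w\tri_{dg}$ via Cauchy--Schwarz. The consistency terms $\sum_F \int_F \{\a\nabla(u-v)\cdot\bn_F\}_w^F\jump{w}$ and $\sum_F\int_F\{\a\nabla w\cdot\bn_F\}_w^F\jump{u-v}$ are the delicate ones: the first is handled by a weighted trace inequality for $u-v$ (using $V^{1+s}(\cT)$ regularity of $u$ and polynomiality of $v$) plus the harmonic-weight bound so that the resulting coefficient powers match $\sqrt{\aH/h_F}$ in $\|w\|_{J,F}$; the second, since $\{\a\nabla w\cdot\bn_F\}_w^F$ lives in a polynomial space, is handled by Lemma~2.4 (inequality (\ref{tracecombined})) applied on each element adjacent to $F$, again tracking the harmonic weights. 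This gives $|a_{dg}(u-v,w)|\le C\,\big(\tri u-v\tri_{dg} + (\sum_K h_K^2\a_K^{-1}\|f\|_{0,K}^2)^{1/2}\big)\tri w\tri_{dg}$, whence $\tri w\tri_{dg}\le C(\tri u-v\tri_{dg}+(\sum_K h_K^2\a_K^{-1}\|f\|_{0,K}^2)^{1/2})$ and, by the triangle inequality, the same bound (up to constants) for $\tri u-u^{dg}_k\tri_{dg}$.

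The final step, exactly as in the CR proof, is to replace the non-robust data term $(\sum_K h_K^2\a_K^{-1}\|f\|_{0,K}^2)^{1/2}$ by the oscillation $\osc_\a(f,\cT)$: one uses the local efficiency argument (Lemma 5.2 of \cite{CaYeZh:09}, valid without QMA) which yields $h_K\a_K^{-1/2}\|f\|_{0,K}\le C(\tri u-v\tri_{dg,\o_K}+\osc_\a(f,K))$ for a suitable choice of $v$ on the patch $\o_K$; summing and absorbing gives the first inequality in (\ref{ceadg}). Two care points remain for the second inequality: writing $app_\a(f,K)$ correctly for the two regularity ranges requires noting that for $0<s_K<1$ the interpolation of $u$ is only $H^{1+s_K}$ so one cannot bound the flux jump by $h_K^{s_K}|\nabla u|_{s_K}$ alone — instead one uses the $\osc_\a$ term there, absorbing the low-regularity flux discrepancy through the residual as in \cite{CaYeZh:09} — and throughout one must verify that every appearance of $\a_F^\pm$ is combined so that only the harmonic average $\aH$ (equivalently the minimum) survives, which is precisely what makes the estimate robust without QMA.

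The main obstacle I anticipate is the consistency term $\sum_{F}\int_F\{\a\nabla(u-v)\cdot\bn_F\}_w^F\jump{w}\,ds$ in the low global regularity regime $s\in(0,1/2]$: here $\nabla(u-v)\cdot\bn_F$ need not have an $L^2(F)$ trace, so the integral must be interpreted as a duality pairing $H^{\delta-1/2}(F)\times H^{1/2-\delta}(F)$ and estimated via the new trace inequalities (Lemmas~2.3--2.4) with $\delta$ depending on $u$; matching the harmonic weights $w_F^\pm = \a_F^\mp/(\a_F^-+\a_F^+)$ against the $\sqrt{\aH/h_F}$ scaling of the jump seminorm so that no ratio $\a_F^+/\a_F^-$ is left uncancelled is the technical crux and the place where the no-QMA claim is actually earned.
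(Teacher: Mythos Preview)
Your proposal is essentially the paper's own argument: coercivity plus the error equation reduce the first inequality to bounding $a_{dg}(u-v,\,e_k)$ for $e_k=u^{dg}_k-v\in D_k$; the volume and penalty terms are handled by Cauchy--Schwarz, the consistency term carrying $\{\a\nabla(u-v)\cdot\bn\}_w^F$ is bounded via the low-regularity trace inequality (\ref{tracecombined}), the one carrying $\{\a\nabla e_k\cdot\bn\}_w^F$ by the usual trace/inverse inequality on polynomials, and in both cases the harmonic weights $w_F^\pm\sqrt{\a_F^\pm/\aH}\le 1/\sqrt{2}$ absorb the coefficient ratios; finally the residual term is converted to oscillation by the efficiency bound of \cite{CaYeZh:09}.

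One correction worth making before you write it out: for general $v\in D_k$ with $k\ge 2$, the term that emerges from (\ref{tracecombined}) is $h_K\a_K^{1/2}\|\Delta(u-v)\|_{0,K}=h_K\a_K^{-1/2}\|f+\a_K\Delta v\|_{0,K}$, i.e.\ the element \emph{residual}, not $h_K\a_K^{-1/2}\|f\|_{0,K}$ as you wrote. The efficiency bound you invoke (Lemma~5.2 of \cite{CaYeZh:09}) is precisely an efficiency bound for this residual, so the argument closes, but your intermediate display should carry $\|\Delta(u-v)\|_{0,K}$ rather than $\|f\|_{0,K}$.
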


\begin{proof}
For any $F\in\cE$, it follows from the trace inequality and (\ref{a-h}) that for all $v\in D_k$
\begin{eqnarray*}
  \sqrt{\aH/h_F} \|\jump{u- v}\|_{0,F} 
 &\leq & \sqrt{\aH/h_F} \left( \|(u-v)|_{K^+_F}\|_{0,F} +  \|(u-v)|_{K^-_F}\|_{0,F} \right) \\[2mm]
 &\leq & C\, \sum_{\kappa=-,+}\left(
 h_{K^\kappa_F}^{-1}\|\a^{1/2}(u-v)\|_{0,K^\kappa_F} 
  + \|\a^{1/2}\nabla(u-v)\|_{0,K^\kappa_F} \right).
 \end{eqnarray*}
Since $u|_K \in H^{1+s_{_K}}(K)$ with $s_K\geq 1$, then
$f|_K =-\a \Delta u |_K \in H^{s_{_K}-1}(K)$.
It is easy to show that 
$$
\osc_\a(f,K)\leq 
  h_K^ {\min \{k,s_K\} } \a_K^{1/2}|\nabla u|_{s_K,K}.
$$ 
Now, the second inequality in (\ref{ceadg}) is a direct consequence of the first inequality in 
(\ref{ceadg}) and the elementwise approximation property of 
discontinuous piecewise polynomials. 
By the triangle inequality, we have
$$
 \tri u-u^{dg}_k\tri_{dg} 
 \leq \tri u-v\tri_{dg}  + \tri u^{dg}_k-v \tri_{dg} 
 \quad\forall\, v \in D_k.
$$

To show the validity of the first
inequality in (\ref{ceadg}), it suffices to prove that
 \beq\label{4.22}
 \tri u^{dg}_k-v \tri_{dg} 
 \leq C\, \left( \tri u-v\tri_{dg} 
+ \osc_\a(f,\cT)\right)
\quad\forall\, v \in D_k.
 \eeq
To this end, for any $v\in D_k$, let 
 \[
  e=u-v
  \quad\mbox{and}\quad
  e_k=u^{dg}_k-v.
  \]
It follows from the coercivity in (\ref{DGcoer}),
the error equation in (\ref{erroreq_dg}), the Cauchy-Schwarz inequality, the fact that
$\jump{u}_F=0$ for all $F\in\cE$, and
the first inequality in (\ref{tracecombined}) that  
 \begin{eqnarray*}
   && C\,\tri e_k \tri_{dg}^2
   \leq  a_{dg}(e_k,\, e_k)  =  a_{dg}(e,\, e_k) \\[2mm]
 &=&\!\! (\a \nabla_h e,\nabla_h e_k)
 +\!\!\sum_{F\in\cE} \int_F \dfrac{\gamma \,\aH \jump{e}\jump{e_k}}{h_F} ds 
  -\!\!\sum_{F\in\cE} \int_F  \{\a\nabla e_k\cdot\bn\}^F_w \jump{e} ds 
  -\!\!\sum_{F\in\cE} \int_F  \{\a\nabla e \cdot\bn\}^F_w \jump{e_k} ds
 \\[2mm]
 &\leq & C \,\Big\{\tri e \tri_{dg}\, \tri e_k\tri_{dg} 
 + \sum_{F\in\cE} \|\jump{e}\|_{0,F}   
    \|\{\a\nabla e_k \cdot\bn\}^F_w\|_{0,F} \\[2mm]
   && \quad +\sum_{F\in\cE} h_F^{-1/2}\|\jump{e_k}\|_{0,F}   
 \sum_{\kappa = -, +} 
 \ w^\kappa_F\a^\kappa_F\left(\|\nabla e\|_{0,K^\kappa_F} 
   + h_K \|\Delta e\|_{0,K^\kappa_F}\right)\Big\}\\[2mm]
 &\equiv & C\,\left(I_1+I_2+I_3\right).
\end{eqnarray*}

By the triangel, trace, and inverse inequalities, we have that 
 \[
 \|\{\a\nabla e_k\cdot\bn\}^F_w\|_{0,F}
 \leq C \sum_{\kappa = -, +}
  w^\kappa_\sF\,h^{-1/2}_{K^\kappa_F} \a^{1/2}_{K^\kappa_F}\,
  \| \a^{1/2}\nabla e_k\|_{0,K^\kappa_F}.
\]
With the choice of the weights in (\ref{weights}), a simple calculation shows that
  \[
  w^\kappa_\sF\sqrt{\dfrac{\a^\kappa_\sF}{\aH}}   
 \leq \dfrac{\sqrt{2}}{2}\,
 \quad\mbox{for }\, \kappa = -,\, +.
  \]
Together with the Cauchy-Schwarz inequality, we have
 \begin{eqnarray*}
 I_2
 &\leq & C\sum_{F\in\cE} \|\jump{e}\|_{J,F}  
    \sum_{\kappa = -, +} 
 \|\a^{1/2}\nabla e_k\|_{0,K^\kappa_F} 
 \leq C\, \tri e\tri_{dg}\,\tri e_k\tri_{dg} \\[2mm]
 I_3 &\leq & C\sum_{F\in\cE} \|\jump{e_k}\|_{J,F}  
    \sum_{\kappa = -, +} 
 \left(\|\a^{1/2}\nabla e\|_{0,K^\kappa_F} 
 + h_{K^\kappa_F} \|\a^{1/2}\Delta e\|_{0,K^\kappa_F}\right)\\[2mm]
 &\leq & C\, \tri e_k \tri_{dg}\,\left( \tri e \tri_{dg}
  +\left(\sum_{K\cT}h^2_K \a_K\|\Delta e\|_{0,K}^2\right)^{1/2}\right).
 \end{eqnarray*}
Combining those inequalities gives that
$$
  \tri e_k \tri_{dg}
 \leq C\, \left( \tri e \tri_{dg} 
 +\left(\sum_{K\in\cT} h_K^2\a_K \|\Delta e\|^2_{0,K} \right)^{1/2}\right).
$$
Now, (\ref{4.22}) is a direct consequence of the following efficiency bound
(see, e.g, Lemma 5.2 in \cite{CaYeZh:09} for the linear case):
$$
 h_K \a^{1/2}_K \|\Delta e\|_{0,K} 
 \leq C\, \left(\|\a^{1/2}\nabla e\|_{0,\o_K} + \osc_\a(f,\o_K)\right).
$$
This completes the proof of the inequality in (\ref{4.22}) and, hence, the theorem.
\end{proof}

\end{document}